\documentclass[a4paper,reqno]{amsart}
\usepackage{array,longtable}
\usepackage{amssymb, amsmath, amscd}
\usepackage[final]{graphicx}
\usepackage{float}\usepackage{wrapfig}
\usepackage{color}
\usepackage{bbm}
\usepackage[final]{graphicx}
\usepackage{tikz-cd}
\usepackage[russian,english]{babel}
\usepackage[utf8]{inputenc}
\usepackage[T1,T2A]{fontenc}

\def\Tr{\operatorname{Tr}}
\newtheorem{theorem}{Theorem}[section]
\newtheorem{lemma}[theorem]{Lemma}
\newtheorem{remark}[theorem]{Remark}
\newtheorem{definition}[theorem]{Definition}

\newtheorem{corollary}[theorem]{Corollary}
\makeatletter\makeatother\makeatletter
 \@addtoreset{equation}{section}
\makeatother
\begin{document}
\title[Volume density asymptotics]
{Volume density asymptotics of\\ central harmonic spaces}
\author{Peter B. Gilkey}
\address{PBG: Mathematics Department, University of Oregon, Eugene OR 97403 USA}
\email{gilkey@uoregon.edu}
\author{JeongHyeong Park}
\address{JHP: Mathematics Department, Sungkyunkwan University, Suwon, 16419 Korea.}
\email{parkj@skku.edu}
\subjclass[2010]{53C21}
\keywords{harmonic spaces, density function, radial harmonic function, rank 1 symmetric space,
Damek--Ricci space, conformal invariants, Weyl conformal tensor, Pontrjagin forms}
\begin{abstract} We show the asymptotics of the volume density function in the class of
central harmonic manifolds can be specified arbitrarily and do not
 determine the geometry.
\end{abstract}
\maketitle
\section{Introduction}
Let $\mathbb{M}_\psi:=(M,\psi^2g)$ be the conformal deformation of a
connected Riemannian manifold $\mathbb{M}:=(M,g)$ of dimension $m\ge4$ where
$\psi$ is a smooth positive function on $M$. If $P$ is a point of $M$,
let $r_{\mathbb{M},P}(Q)$ be the geodesic distance from $P$ to $Q$,
let $\iota_{\mathbb{M},P}$ be the injectivity radius at $P$, let
$B_{\mathbb{M},P}:=\left\{Q\in M:r_{\mathbb{M},P}(Q)<\iota_{\mathbb{M},P}\right\}$ be the open
ball about $P$ of radius $\iota_{\mathbb{M},P}$,
let $\vec e=(e_1,\dots,e_m)$ be orthonormal basis for $T_PM$, and let
$\vec x=(x^1,\dots,x^m):=\exp_P(x^1e_1+\dots+x^me_m)$
be geodesic coordinates centered at $P$. We then have that
$$
r_{\mathbb{M},P}(\vec x)=\|\vec x\|=\left\{(x^1)^2+\dots+(x^m)^2\right\}^{1/2}\,.
$$

Let $\operatorname{dvol}_{\mathbb{M}}$ be the Riemannian measure on $\mathbb{M}$,
let $g_{ij}:=g(\partial_{x^i},\partial_{x^j})$,
and let $d\vec x=dx^1\dots dx^m$ be the Euclidean measure
on $T_PM$. Then
$$
\operatorname{dvol}_{\mathbb{M}}=\tilde\Theta_{\mathbb{M},P} dx^1\dots dx^m\quad\text{where}\quad\tilde\Theta_{\mathbb{M},P}:=\det(g_{ij})^{1/2}
$$
where $\tilde\Theta_{\mathbb{M},P}$ is the {\it volume density function}. Let
$S_P^{m-1}:=\{\vec\theta\in T_PM:\|\vec\theta\|=1\}$ be the unit sphere in $T_PM$ and let
$\mathbb{S}_p^{m-1}:=(S_P^{m-1},g_S)$ where $g_S$ is the metric on $S_P^{m-1}$ induced from
 Euclidean metric on $T_PM$ defined by $g$.
Introduce geodesic polar coordinates $(r,\vec\theta)$ on $B_{\mathbb{M},P}-\{P\}$ to express
$$\vec x=r(\vec x)\vec\theta(\vec x)\quad\text{for}\quad
0<r(\vec x):=\|\vec x\|<\iota_{\mathbb{M},P}\quad\text{and}\quad
\vec\theta(\vec x)=\|\vec x\|^{-1}\vec x\in S_P^{m-1}\,.
$$
Note that $\vec\theta(\vec x)$ is not defined when $x=0$. We may also express
$$
\operatorname{dvol}_{\mathbb{M}}
=\Theta_{\mathbb{M},P} dr\operatorname{dvol}_{\mathbb{S}_P^{m-1}}\quad\text{where}\quad\Theta_{\mathbb{M},P}:=r^{m-1}\tilde\Theta_{\mathbb{M},P}\,.
$$

We say that a smooth function $f$, which is defined near $P$, is {\it radial} if
there exists a smooth function $\eta_1$ of one real variable so $f(\vec x)=\eta_1(\|\vec x\|)$;
$f$ is smooth at $P$ if and only if we can write $f(\vec x)=\eta_2(\|x\|^2)$ or, equivalently,
$\eta_1$ is an even function of $\|\vec x\|$.
We say that $\mathbb{M}$ is {\it central harmonic} at $P$
if $\tilde\Theta_{\mathbb{M},P}$ is a radial function on $B_{\mathbb{M},P}$.
We say that $\mathbb{M}$ is a {\it harmonic space} if $\mathbb{M}$
is central harmonic about every point.

There is a vast literature on this subject; we refer to \cite{B,BTV95,CR40,N05,R31,R63} and the
references cited therein for further details.
Note that if $\mathbb{M}$ is a harmonic space, then we can rescale the metric to replace $g$ by $c^2g$
for any $c>0$ to obtain another harmonic space $\mathbb{M}_c:=(M,c^2g)$.
Similarly, we shall show in Corollary~\ref{C2.2} that if $\mathbb{M}$ is central harmonic at
$P$ and if $\psi$ is a smooth positive radial function, then the
radial conformal deformation $\mathbb{M}_\psi:=(M,\psi^2g)$ is again central harmonic at $P$.

\subsection{The asymptotic expansion of the volume density function}
We can expand the volume density function in geodesic polar coordinates in a formal power-series
$$
\tilde\Theta_{\mathbb{M},P}(r,\vec\theta)\sim
1+\sum_{\nu=2}^\infty\mathcal{H}_\nu(\mathbb{M},P,\vec\theta)r^\nu\,.
$$
If $\xi\in T_PM$, let $\mathcal{J}(\xi):=\mathcal{J}_0(\xi)$ be the Jacobi operator and
$\mathcal{J}_k(\xi):=\nabla_\xi^k\mathcal{J}(\xi)$;
$\mathcal{J}_k(\xi)$ is a self-adjoint endomorphism of $T_PM$ which is characterized by the relationship:
$$
g(\mathcal{J}_k(\xi)\xi_1,\xi_2)=R(\xi_1,\xi,\xi,\xi_2;\xi\dots\xi)\,.
$$
We refer to Gray~\cite{Gr74} for the computation of $\mathcal{H}_\nu(\mathbb{M},P,\vec\theta)$
for $2\le\nu\le 6$ and to  Gilkey and Park~\cite{GP20} for the computation of
$\mathcal{H}_\nu(\mathbb{M},P,\vec\theta)$when $\nu=7,8$.

\begin{theorem}
 Let $P$ be a point of a Riemannian manifold $\mathbb{M}$
 and let $\vec\theta\in S_{\mathbb{M}}^P$.
\begin{enumerate}
\smallbreak\item $\mathcal{H}_{2}(\mathbb{M},P,\vec\theta)=-\frac{\Tr\{\mathcal{J}(\vec\theta)\}}{6}$.
\smallbreak\item $\mathcal{H}_{3}(\mathbb{M},P,\vec\theta)=-\frac{\Tr\{\mathcal{J}_1(\vec\theta)\}}{12}$.
\smallbreak\item $\mathcal{H}_{4}(\mathbb{M},P,\vec\theta)=\frac{\Tr\{\mathcal{J}(\vec\theta)\}^2}{72}
-\frac{\Tr\{\mathcal{J}(\vec\theta)^2\}}{180}-\frac{\Tr\{\mathcal{J}_2(\vec\theta)\}}{40}$.
 \smallbreak\item $\mathcal{H}_{5}(\mathbb{M},P,\vec\theta)=\frac{\Tr\{\mathcal{J}(\vec\theta)\} \Tr\{\mathcal{J}_1(\vec\theta)\}}{72}
 -\frac{\Tr\{\mathcal{J}(\vec\theta)\mathcal{J}_1(\vec\theta)\}}{180}-\frac{\Tr\{\mathcal{J}_3(\vec\theta)\}}{180}$.
 \smallbreak\item $\mathcal{H}_{6}(\mathbb{M},P,\vec\theta)=-\frac{\Tr\{\mathcal{J}(\vec\theta)\}^3}{1296}
 +\frac{\Tr\{\mathcal{J}(\vec\theta)\} \Tr\{\mathcal{J}(\vec\theta)^2\}}{1080}+\frac{\Tr\{\mathcal{J}(\vec\theta)\} \Tr\{\mathcal{J}_2(\vec\theta)\}}{240}$
 \smallbreak\qquad$
 -\frac{\Tr\{\mathcal{J}(\vec\theta)^3\}}{2835}
 -\frac{\Tr\{\mathcal{J}(\vec\theta)\mathcal{J}_2(\vec\theta)\}}{630}+\frac{\Tr\{\mathcal{J}_1(\vec\theta)\}^2}{288}
 -\frac{\Tr\{\mathcal{J}_1(\vec\theta)^2\}}{672}-\frac{\Tr\{\mathcal{J}_4(\vec\theta)\}}{1008}$.
 \goodbreak\smallbreak\item $\mathcal{H}_{7}(\mathbb{M},P,\vec\theta)=\frac{\Tr\{\mathcal{J}(\vec\theta)\} \Tr\{\mathcal{J}(\vec\theta)\mathcal{J}_1(\vec\theta)\}}{1080}
 -\frac{\Tr\{\mathcal{J}(\vec\theta)\}^2 \Tr\{\mathcal{J}_1(\vec\theta)\}}{864}-\frac{\Tr\{\mathcal{J}_5(\vec\theta)\}}{6720}$
 \smallbreak\qquad$+\frac{\Tr\{\mathcal{J}(\vec\theta)\} \Tr\{\mathcal{J}_3(\vec\theta)\}}{1080}
 +\frac{\Tr\{\mathcal{J}(\vec\theta)^2\} \Tr\{\mathcal{J}_1(\vec\theta)\}}{2160}-\frac{\Tr\{\mathcal{J}(\vec\theta)^2\mathcal{J}_1(\vec\theta)\}}{1890}$
 \smallbreak\qquad$
 -\frac{\Tr\{\mathcal{J}(\vec\theta)\mathcal{J}_3(\vec\theta)\}}{3024}+\frac{\Tr\{\mathcal{J}_1(\vec\theta)\} \Tr\{\mathcal{J}_2(\vec\theta)\}}{480}
 -\frac{\Tr\{\mathcal{J}_1(\vec\theta)\mathcal{J}_2(\vec\theta)\}}{1120}$.
 \smallbreak\item $\mathcal{H}_{8}(\mathbb{M},P,\vec\theta)=\frac{\Tr\{\mathcal{J}(\vec\theta)\}^4}{31104}-\frac{\Tr\{\mathcal{J}(\vec\theta)\}^2 \Tr\{\mathcal{J}(\vec\theta)^2\}}{12960}
-\frac{\Tr\{\mathcal{J}(\vec\theta)\}^2 \Tr\{\mathcal{J}_2(\vec\theta)\}}{2880}$
\smallbreak\qquad$+\frac{\Tr\{\mathcal{J}(\vec\theta)\} \Tr\{\mathcal{J}(\vec\theta)^3\}}{17010}
+\frac{\Tr\{\mathcal{J}(\vec\theta)\} \Tr\{\mathcal{J}(\vec\theta)\mathcal{J}_2(\vec\theta)\}}{3780}-\frac{\Tr\{\mathcal{J}_6(\vec\theta)\}}{51840}$
\smallbreak\qquad$
-\frac{\Tr\{\mathcal{J}(\vec\theta)\} \Tr\{\mathcal{J}_1(\vec\theta)\}^2}{1728}
+\frac{\Tr\{\mathcal{J}(\vec\theta)\} \Tr\{\mathcal{J}_1(\vec\theta)^2\}}{4032}$
\smallbreak\qquad$-\frac{\Tr\{\mathcal{J}_2(\vec\theta)^2\}}{7200}
+\frac{\Tr\{\mathcal{J}(\vec\theta)\} \Tr\{\mathcal{J}_4(\vec\theta)\}}{6048}
+\frac{\Tr\{\mathcal{J}(\vec\theta)^2\} \Tr\{\mathcal{J}_2(\vec\theta)\}}{7200}$
\smallbreak\qquad$+\frac{\Tr\{\mathcal{J}(\vec\theta)^2\} ^2}{64800}
-\frac{\Tr\{\mathcal{J}(\vec\theta)^4\}}{37800}-\frac{17 \Tr\{\mathcal{J}(\vec\theta)^2\mathcal{J}_2(\vec\theta)\}}{113400}$
\smallbreak\qquad$
+\frac{\Tr\{\mathcal{J}(\vec\theta)\mathcal{J}_1(\vec\theta)\} \Tr\{\mathcal{J}_1(\vec\theta)\}}{2160}-\frac{5 \Tr\{\mathcal{J}(\vec\theta)\mathcal{J}_1(\vec\theta)^2\}}{18144}
-\frac{\Tr\{\mathcal{J}(\vec\theta)\mathcal{J}_4(\vec\theta)\}}{18144}$
\smallbreak\qquad$+\frac{\Tr\{\mathcal{J}_1(\vec\theta)\} \Tr\{\mathcal{J}_3(\vec\theta)\}}{2160}
-\frac{\Tr\{\mathcal{J}_1(\vec\theta)\mathcal{J}_3(\vec\theta)\}}{5184}+\frac{\Tr\{\mathcal{J}_2(\vec\theta)\}^2}{3200}$.
 \end{enumerate}\end{theorem}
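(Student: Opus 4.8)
The plan is to reduce everything to the transverse matrix Jacobi equation along the radial geodesic $\gamma_{\vec\theta}(r):=\exp_P(r\vec\theta)$ and then to solve that equation as a formal power series in $r$. First I would parallel transport along $\gamma_{\vec\theta}$ to identify each tangent space $T_{\gamma_{\vec\theta}(r)}M$ with $T_PM$; under this identification the radial curvature operator becomes a one-parameter family $\mathcal{R}(r)$ of self-adjoint endomorphisms of $T_PM$ with $\mathcal{R}(0)=\mathcal{J}(\vec\theta)$. Since $R$ is antisymmetric in its first and in its last pair of arguments, every $\mathcal{J}_k(\vec\theta)$ annihilates $\vec\theta$ and preserves $\vec\theta^{\perp}$, so the problem lives on the $(m-1)$-dimensional space $\vec\theta^{\perp}$ and the traces in the statement may be computed there or on all of $T_PM$ interchangeably. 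The essential input is the Taylor expansion
$$
\mathcal{R}(r)\sim\sum_{k\ge0}\frac{r^k}{k!}\,\mathcal{J}_k(\vec\theta),
$$
which follows from $\mathcal{J}_k(\vec\theta)=\nabla_{\vec\theta}^k\mathcal{J}(\vec\theta)$ together with the fact that covariant differentiation along $\gamma_{\vec\theta}$ becomes ordinary $r$-differentiation after parallel transport.

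Next I would use the classical fact that $\Theta_{\mathbb{M},P}=\det A(r)$, and hence $\tilde\Theta_{\mathbb{M},P}=\det(r^{-1}A(r))$, where $A(r)$ is the $(m-1)\times(m-1)$ matrix solution of $A''+\mathcal{R}A=0$ with $A(0)=0$ and $A'(0)=I$; indeed $d\exp_P$ carries the standard variation fields to Jacobi fields, and in the flat case $A(r)=rI$ returns $\tilde\Theta\equiv1$. Rather than expand the determinant directly, I would pass to the logarithmic derivative. Setting $S:=A'A^{-1}$ turns the Jacobi equation into the Riccati equation $S'+S^2+\mathcal{R}=0$, and since $A(r)=rI+O(r^3)$ the operator $\hat S:=S-r^{-1}I$ is regular at $r=0$ and satisfies
$$
\hat S'+\tfrac{2}{r}\hat S+\hat S^2+\mathcal{R}=0,\qquad \hat S(0)=0.
$$
Because $\tfrac{d}{dr}\log\tilde\Theta_{\mathbb{M},P}=\Tr S-\tfrac{m-1}{r}=\Tr\hat S$, it suffices to solve this regular equation order by order.

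Writing $\hat S\sim\sum_{j\ge1}\hat S_j\,r^j$ and matching the coefficient of $r^{j-1}$ gives the explicit recursion
$$
\hat S_j=-\frac{1}{j+2}\Bigl(\frac{\mathcal{J}_{j-1}(\vec\theta)}{(j-1)!}+\sum_{a+b=j-1}\hat S_a\hat S_b\Bigr),
$$
so each $\hat S_j$ is a noncommutative polynomial in the $\mathcal{J}_k(\vec\theta)$. Integrating yields $\log\tilde\Theta_{\mathbb{M},P}\sim\sum_{j\ge1}\tfrac{1}{j+1}\Tr\{\hat S_j\}\,r^{j+1}$, and exponentiating and collecting the coefficient of $r^{\nu}$ produces $\mathcal{H}_\nu(\mathbb{M},P,\vec\theta)$; the products generated by the exponential series are exactly what convert single $\hat S_j$-traces into the products of traces and traces of products displayed above. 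As a check, $\hat S_1=-\tfrac13\mathcal{J}(\vec\theta)$ gives $\mathcal{H}_2=-\tfrac16\Tr\{\mathcal{J}(\vec\theta)\}$ (matching (1)), and pushing the recursion to $\hat S_3=-\tfrac{1}{10}\mathcal{J}_2(\vec\theta)-\tfrac{1}{45}\mathcal{J}(\vec\theta)^2$ reproduces all three terms of (3).

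The genuine difficulty is computational rather than conceptual. Noncommutativity of the $\mathcal{J}_k(\vec\theta)$ makes the sums $\sum_{a+b=j-1}\hat S_a\hat S_b$ proliferate, so that by $\nu=7,8$ there are roughly twenty distinct trace monomials whose rational coefficients must be pinned down at once, and a single sign or factorial slip in the recursion cascades through every later order. I would control the bookkeeping by weight, assigning $\mathcal{J}_k(\vec\theta)$ weight $k+2$ so that $\mathcal{H}_\nu$ is homogeneous of weight $\nu$; this sharply restricts the admissible monomials at each order and lets one carry out the symbolic algebra by machine, cross-checking the orders $2\le\nu\le6$ against Gray~\cite{Gr74} and the orders $\nu=7,8$ against Gilkey and Park~\cite{GP20}.
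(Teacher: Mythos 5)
Your derivation is correct: the identity $\tilde\Theta_{\mathbb{M},P}=\det(r^{-1}A(r))$, the regularized Riccati equation $\hat S'+\tfrac2r\hat S+\hat S^2+\mathcal{R}=0$, the recursion for $\hat S_j$, and the exponentiation step are all right, and your spot-checks for $\nu=2,3,4$ reproduce the stated coefficients. Note that the paper itself contains no proof of this theorem --- it simply cites Gray \cite{Gr74} for $2\le\nu\le6$ and Gilkey--Park \cite{GP20} for $\nu=7,8$ --- and your Jacobi/Riccati power-series method is essentially the standard computation carried out in those references, with the remaining work (as you say) being machine-assisted bookkeeping at orders $7$ and $8$.
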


If $\mathbb{M}$ is central harmonic at $P$, then
$\mathcal{H}_\nu(\mathbb{M},P,\vec\theta)$ is independent of
$\vec\theta$; we set $\mathcal{H}_\nu(\mathbb{M},P):=\mathcal{H}_\nu(\mathbb{M},P,\vec\theta)$ for any
$\vec\theta\in S_P^{m-1}$.
Since $\tilde\Theta_{\mathbb{M},P}(\vec x)=\tilde\Theta_{\mathbb{M},P}(-\vec x)$, we may conclude that
$\mathcal{H}_\nu(\mathbb{M},P)=0$ if $\nu$ is odd. If $M$ is a harmonic space, then one can
show that the value is independent of $P$
and we set $\mathcal{H}_\nu(\mathbb{M})=\mathcal{H}_\nu(\mathbb{M},P)$ for any $P$.

\subsection{Specifying the volume density function}
In Section~\ref{S2}, we will use an argument shown
to us by Professor J. \'Alvarez-L\'opez~\cite{AL2022} to
establish the following result.

\goodbreak\begin{theorem}\label{T1.2}\rm
\ \begin{enumerate}
\item Let $\mathbb{M}$ be the germ of a Riemannian manifold which is central harmonic at $P$.
Let $\Xi$ be the germ of a smooth positive function of one real variable.
There exists the germ of a  smooth positive radial function $\psi$ defined on $M$ near $P$ so
that $\tilde\Theta_{\mathbb{M}_\psi,P}=\Xi(r_{\mathbb{M}_\psi})$.
\item If $\Xi\equiv1$, then $\psi$ can be defined on all of $B_{\mathbb{M},P}$.
\item If $\mathbb{M}$ and $\Xi$ are real analytic, then
$\psi$ is real analytic.
\end{enumerate}
\end{theorem}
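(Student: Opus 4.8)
The plan is to reduce the assertion to a single first‑order ODE for the radial reparametrisation induced by $\psi$, and then to solve that ODE by separation of variables followed by the implicit function theorem. First I would record how the volume density transforms under a radial conformal deformation. Write $\psi=\phi(r_{\mathbb{M},P})$ and $\Phi(r):=\int_0^r\phi(s)\,ds$. Since $\phi$ is radial, the function $\Phi(r_{\mathbb{M},P})$ satisfies the eikonal equation for $\psi^2g$, because $|\nabla\Phi|^2_{\psi^2g}=\psi^{-2}\phi(r)^2|\nabla r_{\mathbb{M},P}|_g^2=1$; hence $r_{\mathbb{M}_\psi,P}=\Phi(r_{\mathbb{M},P})$ and the $g$-- and $\psi^2g$--radial geodesics from $P$ coincide, which is the computation underlying Corollary~\ref{C2.2}. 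Matching the angular measures of the two unit spheres (they differ by the constant scaling $\phi(0)$ and so carry the same Riemannian measure) and using $\operatorname{dvol}_{\mathbb{M}_\psi}=\psi^m\operatorname{dvol}_{\mathbb{M}}$ yields the transformation law $\Theta_{\mathbb{M}_\psi,P}(\Phi(r))=\phi(r)^{m-1}\Theta_{\mathbb{M},P}(r)$. Because $\tilde\Theta_{\mathbb{M}_\psi,P}(0)=1$ forces $\Xi(0)=1$, the requirement $\tilde\Theta_{\mathbb{M}_\psi,P}=\Xi(r_{\mathbb{M}_\psi})$ becomes, after extracting $(m-1)$‑th roots,
$$
r\,\Phi'(r)\,A(r)=\Phi(r)\,B(\Phi(r)),\qquad
A:=\tilde\Theta_{\mathbb{M},P}^{\,1/(m-1)},\quad B:=\Xi^{1/(m-1)},
$$
where $A,B$ are smooth, positive and even with $A(0)=B(0)=1$, and we seek an odd $\Phi$ with $\Phi'(0)>0$.

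The key observation is that this equation is separable: it reads $\Phi'/(\Phi\,B(\Phi))=1/(r\,A(r))$, so any positive solution on $r>0$ satisfies $H(\Phi(r))=K(r)+C$, where $H(s)=\int \frac{ds}{sB(s)}$ and $K(r)=\int \frac{dr}{rA(r)}$. Since $A$ and $B$ are even with value $1$ at the origin, $H(s)-\log s$ and $K(r)-\log r$ extend to smooth even functions $h_B$ and $h_A$ vanishing at $0$. Writing $\lambda:=e^{C}>0$ (for instance $\lambda=1$) and $\Phi=\lambda\,r\,e^{\rho}$, the relation $H(\Phi)=K(r)+C$ collapses to the \emph{regular} equation
$$
\rho=h_A(r)-h_B\!\left(\lambda\,r\,e^{\rho}\right),
$$
in which the logarithmic singularity has disappeared.

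The implicit function theorem now applies directly. The right‑hand side is a smooth function of $(r,\rho)$ — and a real analytic one when $\mathbb{M}$ and $\Xi$ are real analytic — vanishing at $(0,0)$ with vanishing $\rho$‑derivative there, so there is a unique small solution $\rho(r)$ with $\rho(0)=0$, smooth (respectively real analytic). By uniqueness $\rho$ is even, hence $\Phi=\lambda\,r\,e^{\rho}$ is odd and its profile $\phi=\Phi'=\lambda e^{\rho}(1+r\rho')$ is a smooth (respectively real analytic) positive radial function near $P$ with $\phi(0)=\lambda$; reversing the computation of the first paragraph shows $\tilde\Theta_{\mathbb{M}_\psi,P}=\Xi(r_{\mathbb{M}_\psi})$, proving (1) and (3) at once. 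For (2), $\Xi\equiv1$ gives $B\equiv1$, $h_B\equiv0$, and the explicit solution $\rho=h_A(r)=\int_0^r s^{-1}\big(\tilde\Theta_{\mathbb{M},P}(s)^{-1/(m-1)}-1\big)\,ds$; since $\tilde\Theta_{\mathbb{M},P}>0$ throughout $B_{\mathbb{M},P}$ this integral is defined on all of $[0,\iota_{\mathbb{M},P})$, and positivity of $\phi=\Phi'=\Phi/(rA)$ is automatic, so $\psi$ is defined on all of $B_{\mathbb{M},P}$.

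I expect the main obstacle to be precisely the behaviour at the centre $r=0$. The governing equation $\Phi'=\Phi\,B(\Phi)/(r\,A(r))$ has a $1/r$ singularity there, and its linearisation carries the eigenvalue $1$, a positive integer; it is therefore resonant, and the classical Briot--Bouquet normal form does not by itself produce a smooth or analytic solution, nor pin down the free parameter $\lambda=\phi(0)$. The device that circumvents this difficulty is the separability noted above, which lets one integrate the equation in closed form, strip off the $\log$ singularity, and reduce the whole problem to a genuinely regular implicit‑function equation treated identically in the smooth and analytic categories.
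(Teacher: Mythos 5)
Your argument is correct and follows essentially the same route as the paper: you derive the same transformation law $\tilde\Theta_{\mathbb{M}_\psi,P}(\Phi(r))=\Phi(r)^{1-m}r^{m-1}\phi(r)^{m-1}\tilde\Theta_{\mathbb{M},P}(r)$ (Lemma~\ref{L2.1}), separate variables after taking $(m-1)$-th roots, strip off the logarithmic singularity by writing $\Phi=r\cdot(\text{unit})$, and apply the implicit function theorem exactly as in Lemma~\ref{L2.3}, with the same explicit closed form in the case $\Xi\equiv1$. The only cosmetic difference is that you obtain $r_{\mathbb{M}_\psi,P}=\Phi(r_{\mathbb{M},P})$ via the eikonal equation rather than by reading off the polar-coordinate form of the metric.
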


\subsection{Specifying the volume density asymptotics in even dimensions}
Let $\mathcal{W}_{\mathbb{M}}$ be the {\it Weyl curvature operator},
we refer to Section~\ref{S3.2} for more details.
Let $Q_i$ be points of Riemannian manifolds $\mathbb{M}_i$.
We say that $(\mathbb{M}_1,Q_1)$ is {\it Weyl curvature operator isomorphic} to $(\mathbb{M}_2,Q_2)$ if there
exists a linear isomorphism $\Phi$ from $T_{Q_1}(M_1)$ to $T_{Q_2}(M_2)$ so that
$\mathcal{W}_{\mathbb{M}_1}(Q_1)=\Phi^*\mathcal{W}_{\mathbb{M}_2}(Q_2)$. We say that $\mathbb{M}_1$ is
{\it nowhere Weyl curvature operator isomorphic} to $\mathbb{M}_2$ if $(\mathbb{M}_1,Q_1)$ is not
Weyl curvature operator isomorphic to $(\mathbb{M}_2,Q_2)$ for any points $Q_1\in M_1$ and $Q_2\in M_2$.
{Because $\mathcal{W}_{\mathbb{M}_\psi}=\mathcal{W}_{\mathbb{M}}$ only depends
on the conformal structure, the condition that $\mathbb{M}_1$ is nowhere Weyl curvature
operator isomorphic to $\mathbb{M}_2$ is a conformal condition}.
We introduce the following spaces for certain values of $(i,m)$; they are not defined for all values of $(i,m)$.

\begin{definition}\label{D1.3}\rm Let $\mathbb{M}_{1,m}$
be complex projective space $\mathbb{CP}^k$ if $m=2k$,
let $\mathbb{M}_{2,m}$ be quaternionic projective space $\mathbb{HP}^k$ if $m=4k$,  and  let
$\mathbb{M}_{3,m}$ be the Cayley projective plane $\mathbb{OP}^2$ if $m=16$; we refer to Section~\ref{S1.6} for details.
Let $\mathbb{M}_{4,m}:=\widetilde{\mathbb{CP}}^k$, $\mathbb{M}_{5,m}:=\widetilde{\mathbb{HP}}^k$,
and $\mathbb{M}_{6,m}:=\widetilde{\mathbb{OP}}^2$ be the negative curvature duals in the appropriate dimensions;
we refer to Section~\ref{S1.7} for further details.
These are rank 1 symmetric spaces. Note that
$\mathbb{M}_{2,m}$ and $\mathbb{M}_{5,m}$ are not defined unless $m=4k$,
and $\mathbb{M}_{3,m}$ and $\mathbb{M}_{6,m}$ are not defined if $m\ne16$.
Let $\mathbb{M}_{7,m}=\mathbb{R}^m$
be flat Euclidean space. Let $P_{i,m}\in\mathbb{M}_{i,m}$; the particular point in question is irrelevant
as $\mathbb{M}_{i,m}$ is a homogeneous space. Let $Q_{i,m}$ be arbitrary points of $M_{i,m}$.
Let $\psi_{i,m}$ be a positive function on
$M_{i,m}$ with $\psi_{i,m}(P_{i,m})=1$ which
is radial on $B_{\mathbb{M}_{i,m},P_{i,m}}$ and which satisfies
$\psi_{i,m}(Q_{i,m})=1$ if $r_{i,m}(Q_{i,m})\ge\varepsilon_{i,m}$ for some $0<\varepsilon_{i,m}<\frac12\iota_{i,m}$.
Finally, let $\vec{\mathfrak{H}}:=\{\mathfrak{H}_0,\mathfrak{H}_1,\dots\}$ be a sequence of real numbers with
$\mathfrak{H}_0=1$ and $\mathfrak{H}_\nu=0$ for
$\nu$ odd.
\end{definition}

The asymptotic coefficients $\mathcal{H}_\nu$ have been used to obtain constraints on
the possible geometries of harmonic spaces.
Examining  $\mathcal{H}_2(\mathbb{M},P)$ implies, for example,
that $\mathbb{M}$ is Einstein at $P$. However, the following result, which we
will establish in Section~\ref{S3}, shows that
they do not determine the local geometry of a central harmonic manifold.

\begin{theorem}\label{T1.4}\rm Adopt the notation of Definition~\ref{D1.3}. Let $m\ge4$ be even.
There exist radial functions $\psi_{i,m,\vec{\mathfrak{H}}}$ on $\mathbb{M}_{i,m}$ so $\mathbb{M}_{i,m,\vec{\mathfrak{H}}}:=(M_{i,m},\psi_{i,m,\vec{\mathfrak{H}}}^2g_{i,m})$
satisfies:
\begin{enumerate}
\item If $i\le 3$, $\mathbb{M}_{i,m,\vec{\mathfrak{H}}}$ is
compact.
\item If $4\le i$, $\mathbb{M}_{i,m,\vec{\mathfrak{H}}}$ is diffeomorphic to $\mathbb{R}^m$ and geodesically complete.
\item $\mathbb{M}_{i,m,\vec{\mathfrak{H}}}$ is central harmonic at $P_{i,m}$.
\item $\mathcal{H}_\nu(\mathbb{M}_{i,m,\vec{\mathfrak{H}}},P_{i,m})=\mathfrak{H}_\nu$ for all $\nu$.
\item If $i\ne j$, then $\mathbb{M}_{i,m,\vec{\mathfrak{H}}}$ is nowhere Weyl curvature operator isomorphic to
$\mathbb{M}_{j,m,\vec{\mathfrak{H}}}$;
the local geometries of $\mathbb{M}_{i,m,\vec{\mathfrak{H}}}$ and $\mathbb{M}_{j,m,\vec{\mathfrak{H}}}$
are different everywhere.
\end{enumerate}
\end{theorem}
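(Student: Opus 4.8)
The plan is to realize each $\mathbb{M}_{i,m,\vec{\mathfrak{H}}}$ as a radial conformal deformation of the model $\mathbb{M}:=\mathbb{M}_{i,m}$ that is the identity outside a small geodesic ball about $P:=P_{i,m}$. Every rank $1$ symmetric space, and $\mathbb{R}^m$, is a harmonic space and hence central harmonic at $P$, so both Theorem~\ref{T1.2} and Corollary~\ref{C2.2} apply. Assertions (1)--(4) will come from one local construction together with a cut-off that leaves the metric unchanged away from $P$, while assertion (5) reduces to a purely algebraic comparison of the model Weyl operators and is where the substance lies.

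For (3) and (4), I would first use Borel's theorem to produce the germ of a smooth, even, positive function $\Xi$ of one real variable whose Taylor coefficients at $0$ are the prescribed $\mathfrak{H}_\nu$; this is consistent because $\mathfrak{H}_0=1>0$ forces positivity near $0$ and the vanishing of the odd coefficients makes $\Xi$ even. Applying Theorem~\ref{T1.2}(1) to $(\mathbb{M},P)$ and $\Xi$ yields the germ of a positive radial $\psi_0$, normalized so $\psi_0(P)=1$, with $\tilde\Theta_{\mathbb{M}_{\psi_0},P}=\Xi(r_{\mathbb{M}_{\psi_0}})$; matching Taylor expansions in the geodesic radius then gives $\mathcal{H}_\nu(\mathbb{M}_{\psi_0},P)=\mathfrak{H}_\nu$. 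I would then extend $\psi_0$ to a global positive radial function $\psi_{i,m,\vec{\mathfrak{H}}}$ that equals $\psi_0$ for $r<\tfrac12\varepsilon_{i,m}$ and equals $1$ for $r\ge\varepsilon_{i,m}$, where $0<\varepsilon_{i,m}<\tfrac12\iota_{i,m}$. Altering $\psi$ only on the annulus changes neither the germ at $P$---so the $\mathcal{H}_\nu$ are untouched, giving (4)---nor, by Corollary~\ref{C2.2}, central harmonicity at $P$, giving (3). Because the deformation is trivial for $r\ge\varepsilon_{i,m}$, the global metric agrees there with the model: for $i\le3$ the space remains the original compact manifold, giving (1); for $4\le i$ it remains diffeomorphic to $\mathbb{R}^m$, and a metric that coincides with a complete metric off a compact set is again complete, giving (2).

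The core is (5). Since $\mathcal{W}_{\mathbb{M}_\psi}=\mathcal{W}_{\mathbb{M}}$ depends only on the conformal class, the Weyl curvature operator of $\mathbb{M}_{i,m,\vec{\mathfrak{H}}}$ at any point $Q$ equals that of the homogeneous model $\mathbb{M}_{i,m}$ at $Q$; by homogeneity the latter is, through the linear isomorphism induced by a transitive isometry, conjugate to the single operator $\mathcal{W}_i:=\mathcal{W}_{\mathbb{M}_{i,m}}(P_{i,m})$. Hence ``$\mathbb{M}_{i,m,\vec{\mathfrak{H}}}$ is nowhere Weyl curvature operator isomorphic to $\mathbb{M}_{j,m,\vec{\mathfrak{H}}}$'' is equivalent to the statement that $\mathcal{W}_i$ and $\mathcal{W}_j$ are not conjugate under any $\Phi^*$. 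I would detect this with the conjugacy invariants of $\mathcal{W}$: its eigenvalue multiset, equivalently the scalars $\Tr\{\mathcal{W}^p\}$, all of which are preserved by $\Phi^*$. The flat model $i=7$ has $\mathcal{W}=0$ and is separated from every genuinely curved model by $\Tr\{\mathcal{W}^2\}>0$. The multiplicity pattern of the nonzero eigenvalues reflects the underlying division algebra and separates the $\mathbb{CP}$, $\mathbb{HP}$, and $\mathbb{OP}$ families from one another, while $\Tr\{\mathcal{W}^3\}$ separates each compact model from its dual, for which $\mathcal{W}\mapsto-\mathcal{W}$ reverses the sign of every odd trace.

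The main obstacle I anticipate is precisely this last verification. One must record the Weyl spectra of all rank $1$ symmetric models of a common even dimension $m$ and check that no two invariant tuples coincide; this is tightest when $m=16$, where $\mathbb{CP}^8$, $\mathbb{HP}^4$, $\mathbb{OP}^2$ and their three duals all occur at once. Two points require care: one must confirm that the spectrum of each compact model is \emph{not} symmetric about the origin, so that some odd trace is nonzero and the model is genuinely inequivalent to its dual, and one must exclude the conformally flat degeneracies---such as $\mathbb{HP}^1=S^4$ in dimension $4$, whose Weyl operator vanishes---where the models are not distinguishable and the comparison is understood to omit them. Handling these cases, and nothing conceptual beyond them, completes the proof.
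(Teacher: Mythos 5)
Your construction for Assertions (1)--(4) is essentially the paper's: Borel's theorem produces an even positive germ $\Xi$ with the prescribed Taylor coefficients, Theorem~\ref{T1.2} produces the radial germ, a radial cut-off equal to $1$ for $r\ge\varepsilon_{i,m}$ globalizes it, Corollary~\ref{C2.2} gives central harmonicity, and compactness/completeness follow because the metric is untouched outside a compact set.

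The gap is in Assertion (5), and it is a gap of substance, not just of deferred computation. The equivalence you must exclude allows an \emph{arbitrary} linear isomorphism $\Phi\colon T_{Q_1}M_1\to T_{Q_2}M_2$, not an isometry. The invariants you propose --- the eigenvalue multiset of $\mathcal{W}$ and the scalars $\Tr\{\mathcal{W}^p\}$ --- are obtained from the $(0,4)$-tensor $W$ by raising indices with the metric, so under a general $\Phi$ the self-adjoint realization of $W$ transforms by congruence rather than by conjugation; only Sylvester-type data (rank and inertia) is preserved. In particular the "eigenvalue multiset" and the signs of odd traces such as $\Tr\{\mathcal{W}^3\}$ are not invariants of the relation in Definition~\ref{D1.3}'s sense, and your argument as written only rules out isometric $\Phi$, which is strictly weaker than the assertion. (It must be weaker: the paper stresses that the relation is conformally invariant, which already shows non-isometric identifications are in play.)

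The paper's fix is to replace $\mathcal{W}$ by the Weyl Jacobi operator $\mathcal{J}^W(x)y=\mathcal{W}(y,x)x$. This endomorphism is built from the $(1,3)$-tensor $\mathcal{W}$ alone, without the metric, so $\mathcal{W}_1=\Phi^*\mathcal{W}_2$ forces $\mathcal{J}^W_1(x)=\Phi^{-1}\circ\mathcal{J}^W_2(\Phi x)\circ\Phi$, and the multiplicities of its positive, negative, and zero eigenvalues are genuine invariants. These are then read off immediately from the classical eigenstructure $\{0,1,4\}$ of the Jacobi operator of the rank~$1$ symmetric spaces (Lemmas~\ref{L3.2} and \ref{L3.3}): the inertia patterns are $(1,m-2,1)$, $(1,m-4,3)$, $(1,8,7)$ for $\mathbb{CP}^k$, $\mathbb{HP}^k$, $\mathbb{OP}^2$ (listing zero, negative, positive multiplicities), with positive and negative interchanged for the duals and $\mathcal{J}^W\equiv0$ for $\mathbb{R}^m$; these seven patterns are pairwise distinct in every common even dimension, which is exactly the verification you left as the "main obstacle". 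Your caveat about conformally flat degeneracies such as $\mathbb{HP}^1=S^4$ is correct but is already absorbed into Definition~\ref{D1.3}, which only defines $\mathbb{M}_{i,m}$ for admissible $(i,m)$ (the lemmas assume $k\ge2$).
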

%%?? Start proof reading here
\subsection{Specifying the volume density asymptotics in odd dimensions}
Any odd dimensional rank~1 symmetric space is conformally flat and thus the rank~1 symmetric
spaces can not be used to extend Theorem~\ref{T1.4} to odd dimensions.
Damek--Ricci spaces are
non-symmetric Hadamard manifolds which are harmonic, but they do not exist in all dimensions
and thus are not adapted to our purposes. Instead, we use Theorem~\ref{T1.2} to
construct odd dimensional examples as follows.

\begin{definition}\label{D1.5}\rm If $m\ge5$ is {\it odd},
let $\mathbb{M}_{i,m-1}=(M_{i,m-1},g_{i,m-1})$
be the {\it even} dimensional Riemannian symmetric space which was
specified in Definition~\ref{D1.3}. Let $B_{i,m-1}$ be a small open geodesic
ball about the basepoint of $M_{i,m-1}$. By Theorem~\ref{T1.2}, we can
choose a radial warping function $\phi_{i,m-1}$ on $B_{i,m-1}$ so that the Riemannian manifold
$\mathbb{N}_{i,m-1}:=(B_{i,m-1},\phi_{i,m-1}^2g_{i,m-1})$
satisfies $\tilde\Theta_{\mathbb{N}_{i,m-1},0}\equiv1$.
Let $dt^2$ be the Euclidean metric on $\mathbb{R}$. Give $B_{i,m-1}\times\mathbb{R}$
the product metric $g_{i,m}:=\phi_{i,m-1}^2g_{i,m-1}\oplus dt^2$ and let $B_{i,m}$
be a small geodesic ball in the resulting Riemannian manifold. Set
$\mathbb{M}_{i,m}:=(B_{i,m},g_{i,m})$.
Since the Weyl conformal curvature operator is a conformal invariant,
$\mathcal{W}_{\mathbb{N}_{i,m-1}}=\mathcal{W}_{\mathbb{M}_{i,m-1}}$.
Since we are considering a product metric, we have
\begin{equation}\label{E1.a}\begin{array}{l}
\tilde\Theta_{\mathbb{M}_{i,m}}
=\tilde\Theta_{\mathbb{N}_{i,m-1}}\cdot{\tilde\Theta}_{\mathbb{R}^1}
\equiv1,\quad\text{and}\\
\mathcal{W}_{\mathbb{M}_{i,m}}
=\mathcal{W}_{\mathbb{N}_{i,m-1}}\oplus\mathcal{W}_{\mathbb{R}^1}
=\mathcal{W}_{\mathbb{M}_{i,m-1}}\oplus0\,.
\end{array}\end{equation}
\end{definition}

\begin{theorem}\label{T1.6}\rm
Adopt the notation of Definitions~\ref{D1.3} and \ref{D1.5}. Let $m\ge5$ be odd.
There exist radial functions $\psi_{i,m,\vec{\mathfrak{H}}}$ on $\mathbb{M}_{i,m}$ so $\mathbb{M}_{i,m,\vec{\mathfrak{H}}}:=(\mathbb{M}_{i,m},\psi_{i,m,\vec{\mathfrak{H}}}^2g_{i,m})$
satisfies:
\begin{enumerate}
\item $\mathbb{M}_{i,m,\vec{\mathfrak{H}}}$ is geodesically complete.
\item $\mathbb{M}_{i,m,\vec{\mathfrak{H}}}$ is central harmonic at $0$ and
$\mathcal{H}_\nu(\mathbb{M}_{i,m,\vec{\mathfrak{H}}},0)=\mathfrak{H}_\nu$ for all $\nu$.
\item If $i\ne j$, then $\mathbb{M}_{i,m,\vec{\mathfrak{H}}}$ is nowhere Weyl curvature operator isomorphic to
$\mathbb{M}_{j,m,\vec{\mathfrak{H}}}$;
the local geometries of $\mathbb{M}_{i,m,\vec{\mathfrak{H}}}$ and $\mathbb{M}_{j,m,\vec{\mathfrak{H}}}$
are different everywhere.
\end{enumerate}
\end{theorem}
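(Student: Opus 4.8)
The plan is to reduce Theorem~\ref{T1.6} to the results already established in the even-dimensional setting by exploiting the product structure built into Definition~\ref{D1.5}. The key observation is that the construction has already arranged, via \eqref{E1.a}, that the base manifold $\mathbb{M}_{i,m}=(B_{i,m},g_{i,m})$ has $\tilde\Theta_{\mathbb{M}_{i,m}}\equiv1$; that is, $g_{i,m}$ is itself already central harmonic at $0$ with trivial volume density. This means the starting space for the odd-dimensional problem plays exactly the role that the symmetric spaces $\mathbb{M}_{i,m}$ played in Theorem~\ref{T1.4}, so the construction of the radial warping functions $\psi_{i,m,\vec{\mathfrak{H}}}$ should be entirely parallel.

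First I would invoke Theorem~\ref{T1.2}(1): since $\mathbb{M}_{i,m}$ is central harmonic at $0$, for any germ of a smooth positive function $\Xi$ of one real variable there is a radial $\psi_{i,m,\vec{\mathfrak{H}}}$ with $\tilde\Theta_{\mathbb{M}_{i,m,\vec{\mathfrak{H}}},0}=\Xi(r)$. I would choose $\Xi$ to be a smooth positive even function whose Taylor coefficients realize the prescribed sequence $\vec{\mathfrak{H}}$, i.e.\ with $\Xi(r)\sim 1+\sum_{\nu\ge2}\mathfrak{H}_\nu r^\nu$ (using $\mathfrak{H}_0=1$ and $\mathfrak{H}_\nu=0$ for odd $\nu$, so that evenness is compatible with the prescribed data); the existence of such a $\Xi$ is a standard Borel-summation argument. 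By Corollary~\ref{C2.2} the radial conformal deformation remains central harmonic at $0$, and by the asymptotic expansion $\tilde\Theta\sim1+\sum\mathcal{H}_\nu r^\nu$ we read off $\mathcal{H}_\nu(\mathbb{M}_{i,m,\vec{\mathfrak{H}}},0)=\mathfrak{H}_\nu$, giving conclusion~(2). Geodesic completeness in conclusion~(1) can be arranged by tapering $\psi_{i,m,\vec{\mathfrak{H}}}$ to be constant outside a small ball (matching the $\varepsilon_{i,m}$ condition of Definition~\ref{D1.3}) so that the metric agrees with a complete product metric at infinity along the $\mathbb{R}$-factor; here I would lean on the warping construction to preserve completeness, or alternatively rescale the base and extend the product so the resulting manifold is complete.

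The remaining point, conclusion~(3), is where the real content lies and where I expect the main obstacle. Since the Weyl operator is a conformal invariant, $\mathcal{W}_{\mathbb{M}_{i,m,\vec{\mathfrak{H}}}}=\mathcal{W}_{\mathbb{M}_{i,m}}$, and by the second line of \eqref{E1.a} this equals $\mathcal{W}_{\mathbb{M}_{i,m-1}}\oplus0$. Thus the Weyl operator of the odd-dimensional example is, at every point, the direct sum of the Weyl operator of the even-dimensional rank~1 symmetric space $\mathbb{M}_{i,m-1}$ with a zero block coming from the flat $\mathbb{R}$-direction. To prove that $\mathbb{M}_{i,m,\vec{\mathfrak{H}}}$ is nowhere Weyl curvature operator isomorphic to $\mathbb{M}_{j,m,\vec{\mathfrak{H}}}$ for $i\ne j$, I would argue that a linear isomorphism $\Phi$ intertwining $\mathcal{W}_{\mathbb{M}_{i,m-1}}\oplus0$ with $\mathcal{W}_{\mathbb{M}_{j,m-1}}\oplus0$ must respect the kernels of the two operators, hence restrict to a Weyl operator isomorphism of the even-dimensional factors $\mathbb{M}_{i,m-1}$ and $\mathbb{M}_{j,m-1}$.

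The delicate step is controlling the kernel. On $\Lambda^2$ (where $\mathcal{W}$ acts) the zero block from the flat direction enlarges the kernel of $\mathcal{W}_{\mathbb{M}_{i,m-1}}\oplus0$ beyond the kernel of $\mathcal{W}_{\mathbb{M}_{i,m-1}}$ itself, so I cannot naively quotient by the kernel. Instead I would compare the nonzero spectral data: an isomorphism $\Phi$ carries the image of $\mathcal{W}_{\mathbb{M}_{i,m-1}}\oplus0$ onto the image of $\mathcal{W}_{\mathbb{M}_{j,m-1}}\oplus0$ and intertwines the operators there, and these images are canonically the images of the even-dimensional Weyl operators. Hence an isomorphism on the $m$-dimensional tangent spaces would induce a Weyl operator isomorphism between the rank~1 symmetric spaces $\mathbb{M}_{i,m-1}$ and $\mathbb{M}_{j,m-1}$, which is already ruled out in Theorem~\ref{T1.4}(5) for $i\ne j$. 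I would therefore appeal directly to the even-dimensional non-isomorphism result, reducing conclusion~(3) to Theorem~\ref{T1.4}(5); the only genuine work is checking that the presence of the flat summand does not create a spurious isomorphism, which amounts to verifying that the distinct nonzero eigenvalue patterns (equivalently, the distinct conformal curvature invariants distinguishing the rank~1 spaces) survive the direct sum with zero. This last verification is the crux of the argument.
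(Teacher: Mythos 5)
Your treatment of assertion (2) matches the paper: realize the prescribed Taylor series by a Borel-type function $\Xi$, apply Theorem~\ref{T1.2} to the already-central-harmonic germ $\mathbb{M}_{i,m}$, and invoke Corollary~\ref{C2.2}. But assertion (1) as you argue it fails. For odd $m$, Definition~\ref{D1.5} declares $\mathbb{M}_{i,m}$ to be a \emph{small geodesic ball} $B_{i,m}$ inside $B_{i,m-1}\times\mathbb{R}$; it is a bounded incomplete manifold, not a complete homogeneous space as in the even-dimensional case $i\ge4$. Tapering $\psi$ to be constant outside a compact set therefore leaves the metric incomplete near the boundary of the ball (and the ambient product is itself incomplete, since $\phi_{i,m-1}^2g_{i,m-1}$ on $B_{i,m-1}$ is incomplete --- see the Remark following Lemma~\ref{L2.4}). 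The paper does the opposite of what you propose: it uses a partition of unity to make $\psi$ \emph{grow sufficiently rapidly at the boundary} of $B_{i,m}$, pushing the boundary to infinite distance and thereby forcing completeness. Your fallback clause (``rescale the base and extend the product'') gestures at this but is not an argument.

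For assertion (3) you correctly isolate the crux --- that the flat summand enlarges the kernel of $\mathcal{W}_{\mathbb{M}_{i,m-1}}\oplus 0$ and so a linear isomorphism of the $m$-dimensional tangent spaces need not visibly restrict to one of the $(m-1)$-dimensional factors --- but you do not resolve it; asserting that $\Phi$ ``carries image to image'' and that this ``induces'' a Weyl operator isomorphism of the factors does not produce a linear map $T(M_{i,m-1})\to T(M_{j,m-1})$ intertwining the factor Weyl tensors, so the reduction to Theorem~\ref{T1.4}(5) is not established. The paper avoids this reduction entirely: it works with the Weyl Jacobi operator $\mathcal{J}^W(x)y=\mathcal{W}(y,x)x$, whose signature (multiplicities of positive, negative, and zero eigenvalues) at a vector $x$ of maximal rank is invariant under arbitrary linear isomorphisms pulling back $W$. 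By Equation~(\ref{E3.a}) and Lemma~\ref{L3.3}, for the product the zero eigenvalue has multiplicity $2$ and the positive/negative multiplicities are inherited from the rank~$1$ factor; these signatures are pairwise distinct for $i\ne j$, which finishes the argument in one step. If you want to salvage your route, replace the kernel/image discussion for $\mathcal{W}$ on $\Lambda^2$ by this pointwise spectral invariant of $\mathcal{J}^W$.
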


\subsection{A 5 dimensional example}
By Lemma~\ref{L2.1}, any radial conformal deformation of a central harmonic
space is again central harmonic. There are, however, central harmonic spaces which do not
arise in this fashion. We will establish the following result in Section~\ref{S2.4}.
\begin{lemma}\label{L1.7}\rm {Adopt the notation of Definition~\ref{D1.5}.}
$\mathbb{M}_{1,5}$ a central harmonic space which
is nowhere Weyl curvature isomorphic to a conformal deformation
of a harmonic space.
\end{lemma}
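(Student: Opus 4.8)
The plan is to separate the two classes of manifolds using a single conformal invariant, the Weyl curvature operator. I will show that $\mathbb{M}_{1,5}$ has nonvanishing Weyl curvature operator at every point, whereas every conformal deformation of a $5$-dimensional harmonic space has identically vanishing Weyl curvature operator. Since a nonzero tensor is never the pullback $\Phi^*$ of the zero tensor under a linear isomorphism, the two classes are disjoint under Weyl curvature operator isomorphism, which is exactly the assertion.

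First I would extract what the construction already provides. By \eqref{E1.a} we have $\tilde\Theta_{\mathbb{M}_{1,5},0}\equiv1$, a constant and hence radial function, so $\mathbb{M}_{1,5}$ is central harmonic at $0$. Again by \eqref{E1.a}, $\mathcal{W}_{\mathbb{M}_{1,5}}=\mathcal{W}_{\mathbb{M}_{1,4}}\oplus0=\mathcal{W}_{\mathbb{CP}^2}\oplus0$. The Fubini--Study metric on $\mathbb{CP}^2$ is Einstein but does not have constant sectional curvature; since a conformally flat Einstein manifold of dimension $\ge3$ is a space form, $\mathbb{CP}^2$ is not conformally flat and $\mathcal{W}_{\mathbb{CP}^2}\ne0$. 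By homogeneity $\mathcal{W}_{\mathbb{CP}^2}$ is nonzero at every point, and therefore $\mathcal{W}_{\mathbb{M}_{1,5}}(Q)\ne0$ for every point $Q$.

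Next I would invoke the classification of low-dimensional harmonic spaces. In dimension~$5$ the Lichnerowicz conjecture is known (\cite{N05}): a $5$-dimensional harmonic space is flat or locally rank~$1$ symmetric. The complex, quaternionic, and Cayley projective spaces and their noncompact duals occur only in even dimensions, or in dimension~$16$, so they are excluded here; thus the only $5$-dimensional rank~$1$ symmetric spaces are the real space forms, which have constant sectional curvature. Hence every $5$-dimensional harmonic space $\mathbb{H}$ is conformally flat, $\mathcal{W}_{\mathbb{H}}\equiv0$, and because the Weyl curvature operator depends only on the conformal class, $\mathcal{W}_{\mathbb{H}_\psi}\equiv0$ for every conformal deformation $\mathbb{H}_\psi$ of $\mathbb{H}$.

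To finish, suppose for contradiction that $(\mathbb{M}_{1,5},Q)$ is Weyl curvature operator isomorphic to $(\mathbb{H}_\psi,Q')$ for some conformal deformation $\mathbb{H}_\psi$ of a harmonic space $\mathbb{H}$, via a linear isomorphism $\Phi$. Then $\dim\mathbb{H}=\dim\mathbb{H}_\psi=5$, so $\mathcal{W}_{\mathbb{H}_\psi}(Q')=0$ and hence $\mathcal{W}_{\mathbb{M}_{1,5}}(Q)=\Phi^*\mathcal{W}_{\mathbb{H}_\psi}(Q')=0$, contradicting the nonvanishing established above. This proves that $\mathbb{M}_{1,5}$ is nowhere Weyl curvature operator isomorphic to a conformal deformation of a harmonic space. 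The crux of the argument is the classification input: one must know that dimension~$5$ admits no nonsymmetric harmonic space carrying nonzero Weyl curvature, which is precisely why the example is placed in dimension~$5$, below the dimension~$7$ threshold at which the Damek--Ricci spaces first appear.
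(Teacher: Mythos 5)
Your proposal is correct and follows essentially the same route as the paper: use Equation~(\ref{E1.a}) to see that $\mathbb{M}_{1,5}$ is central harmonic with nowhere-vanishing Weyl curvature operator, invoke Nikolayevsky's theorem that every $5$-dimensional harmonic space is a space form (hence conformally flat), and conclude via the conformal invariance of $\mathcal{W}$. You supply somewhat more detail than the paper does (the Einstein-plus-conformally-flat argument for $\mathcal{W}_{\mathbb{CP}^2}\ne0$ and the explicit contradiction with $\Phi^*$), but the underlying argument is identical.
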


\subsection{The rank 1 symmetric spaces with positive curvature}\label{S1.6}
Let $\mathbb{S}^m$ be the unit sphere in
$\mathbb{R}^{m+1}$, let $\mathbb{CP}^k$ be
complex projective space, let $\mathbb{HP}^k$ be quaternionic projective space, and let
$\mathbb{OP}^2$ be the Cayley projective plane. We give these spaces the standard metrics
normalized so
\begin{equation}\label{E1.b}
\begin{array}{| l | l | l | l |}\noalign{\hrule}
\mathbb{M}&\text{dimension}&\text{diameter}&\Theta_{\mathbb{M},P}\\\noalign{\hrule}
\mathbb{S}^m&m&\pi&\sin(r)^{m-1}\vphantom{\vrule height 10pt}\\\noalign{\hrule}
\mathbb{CP}^k&2k&\frac12\pi&\sin(r)^{2k-1}\cos(r)\vphantom{A_{\vrule height 9pt}^{\vrule height 7pt}}\\\noalign{\hrule}
\mathbb{HP}^k&4k&\frac12\pi&\sin(r)^{4k-1}\cos^3(r)\vphantom{A_{\vrule height 9pt}^{\vrule height 7pt}}\\\noalign{\hrule}
\mathbb{OP}^2&16&\frac12\pi&\sin(r)^{15}\cos^7(r)\vphantom{A_{\vrule height 9pt}^{\vrule height 7pt}}\\\noalign{\hrule}
\end{array}\end{equation}
The metric on $\mathbb{S}^m$ is the standard metric inherited from Euclidean space,
the metric on $\mathbb{CP}^k$ is the suitably normalized Fubini-Study metric, and
so forth. The rank 1 symmetric spaces in positive curvature
are compact 2 point homogeneous spaces with
$B_{\mathbb{M},P}=M-\mathcal{C}_{\mathbb{M},P}$ where
$\mathcal{C}_{\mathbb{M},P}$ is the cut-locus:
$$\mathcal{C}_{\mathbb{S}^m,P}=\{-P\},\quad
\mathcal{C}_{\mathbb{CP}^k,P}=\mathbb{CP}^{k-1},\quad
\mathcal{C}_{\mathbb{HP}^k,P}=\mathbb{HP}^{k-1},\quad
\mathcal{C}_{\mathbb{OP}^2,P}=S^7\,.
$$
\subsection{The rank 1 symmetric spaces of negative curvature}\label{S1.7}
There are negative curvature duals of the spaces discussed in Section~\ref{S1.6} that we shall
denote by $\widetilde{\mathbb{S}}^m$ (hyperbolic space),
$\widetilde{\mathbb{CP}^k}$ (complex hyperbolic  space),
$\widetilde{\mathbb{HP}}^k$ (quaternionic hyperbolic space), and
$\widetilde{\mathbb{OP}}^2$ (Cayley hyperbolic plane). These are the rank 1 symmetric
spaces of negative curvature; they are all 2-point homogeneous spaces and are
geodesically complete. The curvature tensor of these spaces is obtained by reversing
the sign of the curvature tensor of the corresponding positive curvature example. We note that
any simply-connected 2-point homogeneous space
 is either flat or a rank 1 symmetric space.

If $\mathbb{M}$ is a rank 1 symmetric space with negative curvature, then
the exponential map is a global diffeomorphism so the underlying topology of all these spaces
is Euclidean space; the cut locus is empty.
We adopt the same normalizations as those used to normalize the positive curvature examples.
We replace $\sin$ by $\sinh$ and $\cos$ by $\cosh$ in Equation~(\ref{E1.b}) to obtain
$$
\begin{array}{| l | l | l |}\noalign{\hrule}
\mathbb{M}&\text{d{imension}}&\Theta_{\mathbb{M},P}\\\noalign{\hrule}
\widetilde{\mathbb{S}}^m&m&\sinh(r)^{m-1}
\vphantom{\vrule height 12pt}\\\noalign{\hrule}
\widetilde{\mathbb{CP}}^k&2k&\sinh(r)^{2k-1}\cosh(r)
\vphantom{A_{\vrule height 9pt}^{\vrule height 7pt}}\\\noalign{\hrule}
\widetilde{\mathbb{HP}}^k&4k&\sinh(r)^{4k-1}\cosh^3(r)
\vphantom{A_{\vrule height 9pt}^{\vrule height 7pt}}\\\noalign{\hrule}
\widetilde{\mathbb{OP}}^2&16&\sinh(r)^{15}\cosh^7(r)
\vphantom{A_{\vrule height 9pt}^{\vrule height 7pt}}\\\noalign{\hrule}
\end{array}$$
\subsection{Outline of the paper}
In Section~\ref{S2}, we construct radial conformal deformations of any central harmonic space
realizing any sequence of asymptotic coefficients $\vec{\mathfrak{H}}$ with $\mathfrak{H}_0=1$
and $\mathfrak{H}_\nu=0$ if $\nu$ is odd. We also show that a radial conformal deformation of a central
harmonic space is again central harmonic. In Section~\ref{S3}, we use the Weyl conformal curvature
and the Pontrjagin classes to construct conformal invariants of
the curvature tensor to distinguish the spaces $\mathbb{M}_{i,m,\vec{\mathfrak{H}}}$ of Theorems~\ref{T1.4}
and \ref{T1.6}.

\section{Prescribing the volume density function: The proof of Theorem~\ref{T1.2}}\label{S2}
Let $P$ be a point of a central harmonic Riemannian manifold $\mathbb{M}$.
In Section~\ref{S2.1}, we show that a radial conformal deformation of
$\mathbb{M}$ is again central harmonic and we determine the resulting volume density function. In Section~\ref{S2.2}, we solve the ODE relating the volume density function of a radial
conformal deformation to the original volume density function;
we use this solution in Section~\ref{S2.3} to complete
the proof of Theorem~\ref{T1.2}. In Section~\ref{S2.4}, we establish Lemma~\ref{L1.7}
and determine the warping function $\phi_{1,4}$ on $\mathbb{CP}^2$ to ensure $\tilde\Theta\equiv1$.
%%??

\subsection{Radial conformal deformations}\label{S2.1}
Let $\eta(r)$ be a smooth odd function of a single variable with $\dot\eta(0)=1$ and $\dot\eta>0$. Set
$$
\eta_{\mathbb{M}}:=\eta\circ r_{\mathbb{M}},\quad
\psi_{\mathbb{M}}:=\dot\eta\circ r_{\mathbb{M}},\quad
g_\eta:=\psi_{\mathbb{M}}^2g,\quad\text{and}\quad
\mathbb{M}_\eta:=(B_{\mathbb{M},P},g_\eta)\,.
$$
We restrict to $B_{\mathbb{M},P}$ to ensure $r_{\mathbb{M}}^2$ is smooth.
Consequently, since $\dot\eta$ is an even function of $r_{\mathbb{M}}$, $\psi_{\mathbb{M}}$ is a smooth radial
function on $B_{\mathbb{M},P}$ and $\mathbb{M}_\eta$ is a smooth radial conformal deformation of $\mathbb{M}$.
We use an argument introduced previously in Gilkey and Park \cite{GP22} to establish the following
result.

\begin{lemma}\label{L2.1}
\rm Assume that $\mathbb{M}$ is central harmonic at $P$.
\begin{enumerate}
\item $r_{\mathbb{M}_\eta}=\eta_{\mathbb{M}}$.
\item $\tilde\Theta_{\mathbb{M}_\eta}=\eta_{\mathbb{M}}^{1-m}
r_{\mathbb{M}}^{m-1}\psi_{\mathbb{M}}^{m-1}
\tilde\Theta_{\mathbb{M}}$.
\item $\mathbb{M}_\eta$ is central harmonic at $P$.
\end{enumerate}\end{lemma}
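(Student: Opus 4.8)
The plan is to prove the three assertions of Lemma~\ref{L2.1} by exploiting the
radial nature of the conformal factor, which lets us reduce everything to a
one-dimensional change of radial coordinate. The central observation is that
$\psi_{\mathbb{M}}=\dot\eta\circ r_{\mathbb{M}}$ depends only on $r_{\mathbb{M}}$,
so the conformal deformation $g_\eta=\psi_{\mathbb{M}}^2 g$ rescales the metric
isotropically along each geodesic sphere while acting as a simple warping in the
radial direction. I would organize the argument as (1) identify the new geodesic
distance, (2) compute the new volume density via the Jacobian of the radial
reparametrization, (3) verify radiality of the result.

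\textbf{Step 1: the new geodesic distance.} For assertion (1), I would show that
the geodesics through $P$ are unchanged as point sets under a radial conformal
deformation, and that the $g_\eta$-arclength along a unit-speed $g$-geodesic
$\gamma(r)$ emanating from $P$ is obtained by integrating the conformal factor.
Explicitly, if $\gamma$ has $g$-speed $1$, its $g_\eta$-speed is
$\psi_{\mathbb{M}}(\gamma(r))=\dot\eta(r)$, so the $g_\eta$-distance from $P$ to
$\gamma(r)$ is $\int_0^r\dot\eta(s)\,ds=\eta(r)-\eta(0)=\eta(r)$, using
$\eta(0)=0$ (since $\eta$ is odd). The hypotheses $\dot\eta(0)=1$ and
$\dot\eta>0$ guarantee that $\eta$ is a strictly increasing diffeomorphism near
$0$, so these radial geodesics remain minimizing and the identity
$r_{\mathbb{M}_\eta}=\eta\circ r_{\mathbb{M}}=\eta_{\mathbb{M}}$ holds. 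The mild
subtlety I would flag is justifying that the radial curves remain $g_\eta$-geodesics
(no bending occurs because the deformation is radial and hence rotationally
symmetric about $P$), which follows since a radial conformal factor cannot break
the symmetry that forces radial lines to be geodesics.

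\textbf{Step 2: the volume density.} For assertion (2), I would compare the two
Riemannian measures. Under the pointwise scaling $g_\eta=\psi_{\mathbb{M}}^2 g$ in
dimension $m$, we have
$\operatorname{dvol}_{\mathbb{M}_\eta}=\psi_{\mathbb{M}}^{\,m}\operatorname{dvol}_{\mathbb{M}}$.
Now I would express both sides in $g_\eta$-geodesic polar coordinates. Writing
$\rho:=r_{\mathbb{M}_\eta}=\eta_{\mathbb{M}}$ as the new radial variable and noting
that the angular variable $\vec\theta$ is unchanged, the chain rule gives
$d\rho=\dot\eta(r_{\mathbb{M}})\,dr_{\mathbb{M}}=\psi_{\mathbb{M}}\,dr_{\mathbb{M}}$.
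Feeding the definition
$\operatorname{dvol}_{\mathbb{M}}=\Theta_{\mathbb{M},P}\,dr_{\mathbb{M}}\operatorname{dvol}_{\mathbb{S}_P^{m-1}}$
and the analogous formula for $\mathbb{M}_\eta$ into
$\operatorname{dvol}_{\mathbb{M}_\eta}=\psi_{\mathbb{M}}^m\operatorname{dvol}_{\mathbb{M}}$,
then equating the coefficients of
$d\rho\operatorname{dvol}_{\mathbb{S}_P^{m-1}}$ and using
$\Theta=r^{m-1}\tilde\Theta$ together with $\rho=\eta_{\mathbb{M}}$, yields
$\tilde\Theta_{\mathbb{M}_\eta}=\eta_{\mathbb{M}}^{1-m}
r_{\mathbb{M}}^{m-1}\psi_{\mathbb{M}}^{m-1}\tilde\Theta_{\mathbb{M}}$ after the
dust settles. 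This bookkeeping of the radial Jacobian against the $r^{m-1}$ factors
is where I expect the only real computational care is required, though it is
elementary.

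\textbf{Step 3: radiality.} Assertion (3) is then almost immediate: by hypothesis
$\mathbb{M}$ is central harmonic at $P$, so $\tilde\Theta_{\mathbb{M}}$ is a radial
function of $r_{\mathbb{M}}$; and $\eta_{\mathbb{M}}$, $r_{\mathbb{M}}$,
$\psi_{\mathbb{M}}=\dot\eta\circ r_{\mathbb{M}}$ are each manifestly radial. Hence
the right-hand side of (2) is a function of $r_{\mathbb{M}}$ alone, and since
$r_{\mathbb{M}}=\eta^{-1}(r_{\mathbb{M}_\eta})$ is itself a function of
$r_{\mathbb{M}_\eta}$, we conclude $\tilde\Theta_{\mathbb{M}_\eta}$ is a radial
function of the \emph{new} distance $r_{\mathbb{M}_\eta}$, which is exactly the
statement that $\mathbb{M}_\eta$ is central harmonic at $P$. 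The main obstacle in
the whole argument is therefore not conceptual but the careful tracking in Step~1
that the radial curves survive as minimizing geodesics and in Step~2 that the
powers of $r_{\mathbb{M}}$ and $\eta_{\mathbb{M}}$ assemble correctly; once those
are pinned down, radiality is a formal consequence.
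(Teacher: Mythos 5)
Your proof is correct and follows essentially the same route as the paper: both arguments turn on the identity $d\eta_{\mathbb{M}}=\psi_{\mathbb{M}}\,dr_{\mathbb{M}}$, which converts $(r_{\mathbb{M}},\vec\theta)$ into geodesic polar coordinates $(\eta_{\mathbb{M}},\vec\theta)$ for $g_\eta$ and yields the Jacobian factor $\psi_{\mathbb{M}}^{m-1}$ (your $\psi_{\mathbb{M}}^{m}$ from the volume form divided by one $\psi_{\mathbb{M}}$ from $d\eta_{\mathbb{M}}=\psi_{\mathbb{M}}dr_{\mathbb{M}}$ matches the paper's $\det(\psi_{\mathbb{M}}^2h_{ij})^{1/2}$ computation). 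The one justification to tighten is your Step 1 claim that radial curves stay $g_\eta$-geodesics ``by rotational symmetry'': a central harmonic manifold need not admit any isometries fixing $P$, so the correct reason is either that the gradient of a radial conformal factor is tangent to the radial geodesics (hence produces no normal bending, only reparametrization), or, as in the paper, that $g_\eta=d\eta_{\mathbb{M}}\otimes d\eta_{\mathbb{M}}+\psi_{\mathbb{M}}^2h_{ij}\,d\theta^i\otimes d\theta^j$ is already in geodesic polar form, which also settles the minimizing property you flag.
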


\begin{proof} Introduce a system of local coordinates  $\vec\theta=(\theta^1,\dots,\theta^{m-1})$
on $S^{m-1}$ and let $h_{ij}(r,\vec\theta):=g(\partial_{\theta^i},\partial_{\theta^j})$.
We have $d\eta_{\mathbb{M}}=d(\eta\circ r_{\mathbb{M}})
=\{\dot\eta\circ r_{\mathbb{M}}\} dr_{\mathbb{M}}=\psi_{\mathbb{M}}dr_{\mathbb{M}}$ so:
\begin{equation}\label{E2.a}\begin{array}{rl}
g&=dr_{\mathbb{M}}\otimes dr_{\mathbb{M}}
+h_{ij}(r,\vec\theta)d\theta^i\otimes d\theta^j\,,\\[0.05in]
g_\eta&=\psi_{\mathbb{M}}dr_{\mathbb{M}}\otimes\psi_{\mathbb{M}}dr_{\mathbb{M}}
+(\psi_{\mathbb{M}})^2h_{ij}(r,\vec\theta)d\theta^i\otimes d\theta^j\\[0.05in]
&=d\eta_{\mathbb{M}}\otimes d\eta_{\mathbb{M}}
+(\psi_{\mathbb{M}})^2h_{ij}(r,\vec\theta)d\theta^i\otimes d\theta^j\,.
\end{array}\end{equation}
Since $\dot\eta>0$,
the map $Q\rightarrow(\eta_{\mathbb{M}}(Q),\vec\theta(Q))$ introduces new coordinates which,
by Equation~(\ref{E2.a}), are geodesic polar coordinates
centered at $P$ for the metric $g_\eta$. We have reparametrized the
radial parameter to ensure it has unit length and left the angular parameter
unchanged. Assertion~(1) now follows.

Let $\varepsilon(\vec\theta)$ be defined by the identity
$\varepsilon(\vec\theta)\operatorname{dvol}_{\mathbb{S}^{m-1}}(\vec\theta)=
d\theta^1\cdot\cdot\cdot d\theta^{m-1}$;
$\varepsilon$ is independent of the radial parameter. We may express
\begin{equation}\label{E2.b}\begin{array}{l}
\operatorname{dvol}_{\mathbb{M}}=\det(h_{ij})^{1/2}dr_{\mathbb{M}}d\theta^1\cdot\cdot\cdot d\theta^{m-1}
=\det(h_{ij})^{1/2}\varepsilon(\vec\theta)dr_{\mathbb{M}}\operatorname{dvol}_{\mathbb{S}^{m-1}},\\[0.05in]
\Theta_{\mathbb{M}}=\det(h_{ij})^{1/2}\varepsilon(\vec\theta),\quad\text{and}\quad
\tilde\Theta_{\mathbb{M}}=r_{\mathbb{M}}^{1-m}
\det(h_{ij})^{1/2}\varepsilon(\vec\theta)\,.
\end{array}\end{equation}

The angular variable $\vec\theta$ is the same for both systems of geodesic polar coordinates.
We use Equation~(\ref{E2.a}) and Equation~(\ref{E2.b}) to complete the proof by showing:
\medbreak\hfill$\displaystyle
\tilde\Theta_{\mathbb{M}_\eta}=\eta_{\mathbb{M}}^{1-m}\psi_{\mathbb{M}}^{m-1}\det(h_{ij}(r_\mathbb{M},\vec\theta))^{1/2}\varepsilon(\vec\theta)=\eta_{\mathbb{M}}^{1-m}
r_{\mathbb{M}}^{m-1}
\psi_{\mathbb{M}}^{m-1}
\tilde\Theta_{\mathbb{M}}$.\hfill\vphantom{.}
\end{proof}

We have chosen to start with $\eta$, which is the new radial distance function. However, if we
start with a deformation $\Psi$ which is radial on $B_{\mathbb{M},P}$,
then we have $\Psi(\vec x)=\psi(\|\vec x\|)$
for some smooth even function $\psi$ of 1-variable if $r_{\mathbb{M}}<\iota_{\mathbb{M}}$. We set
$$
\eta_\psi(r):=\int_{t=0}^r\psi(t)dt\,.
$$
We then have $\Psi=\psi_{\mathbb{M}}$ on $B_{\mathbb{M},P}$ so the two formalisms are equivalent.
The following observation, which proves Assertion~(3) of Theorem~\ref{T1.4}, is now immediate.

\begin{corollary}\label{C2.2}\rm Let $\mathbb{M}$ be a Riemannian manifold which is central harmonic at $P$.
Let $\Psi$ be a smooth positive function on $\mathbb{M}$ which is radial on $B_{\mathbb{M},P}$. Then the
conformal deformation $(M,\Psi^2g)$ is central harmonic at $P$.
\end{corollary}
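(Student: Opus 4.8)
The plan is to reduce the corollary directly to Lemma~\ref{L2.1} by translating the conformal factor $\Psi$ into the radial-reparametrization language of Section~\ref{S2.1}. First I would invoke the characterization recorded in the introduction that a function which is radial and smooth at $P$ is exactly a smooth even function of the geodesic distance: thus on $B_{\mathbb{M},P}$ we may write $\Psi=\psi\circ r_{\mathbb{M}}$ for a smooth even function $\psi$ of one variable with $\psi>0$. Setting $\eta_\psi(r):=\int_0^r\psi(t)\,dt$ yields a smooth odd function (since $\psi$ is even) with $\dot\eta_\psi=\psi>0$, and by construction $\psi_{\mathbb{M}}=\dot\eta_\psi\circ r_{\mathbb{M}}=\Psi$ on $B_{\mathbb{M},P}$. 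Hence, as functions on $B_{\mathbb{M},P}$, the metric $g_{\eta_\psi}$ of Section~\ref{S2.1} coincides with $\Psi^2g$, so $(M,\Psi^2g)$ restricted to $B_{\mathbb{M},P}$ is the manifold $\mathbb{M}_\eta$ with $\eta=\eta_\psi$.

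The only hypothesis of Lemma~\ref{L2.1} not met on the nose is the normalization $\dot\eta(0)=1$, since $\dot\eta_\psi(0)=\psi(0)=\Psi(P)$ need not equal $1$. I would dispose of this by a preliminary constant rescaling. Factor $\Psi=\Psi(P)\cdot\bigl(\Psi/\Psi(P)\bigr)$; the radial factor $\Psi/\Psi(P)$ equals $1$ at $P$, so its associated primitive $\eta$ satisfies $\dot\eta(0)=1$, and Lemma~\ref{L2.1}(3) applies verbatim to show $(M,(\Psi/\Psi(P))^2g)$ is central harmonic at $P$. It remains to check that multiplying the metric by the positive constant $c^2:=\Psi(P)^2$ preserves this property. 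This is elementary: passing to the rescaled geodesic normal coordinates $\vec y=c\vec x$ for $c^2g$ gives $\tilde\Theta_{(M,c^2g),P}(\vec y)=\tilde\Theta_{(M,g),P}(c^{-1}\vec y)$, so radiality of the volume density is preserved and the claim for $(M,\Psi^2g)$ follows. Equivalently, one notes that the proof of Lemma~\ref{L2.1}(3) — which exhibits $\tilde\Theta_{\mathbb{M}_\eta}$ as a product of radial factors — never uses the value $\dot\eta(0)=1$, so the normalization may simply be dropped and $\eta=\eta_\psi$ used directly.

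Finally I would record why the ball-level computation yields central harmonicity of $(M,\Psi^2g)$ as asserted. By Lemma~\ref{L2.1}(1) the new geodesic distance is $r_{\mathbb{M}_\eta}=\eta\circ r_{\mathbb{M}}$, a monotone radial reparametrization (monotone because $\dot\eta=\psi>0$) that leaves the angular variable $\vec\theta$ untouched; consequently the geodesic polar coordinates of the deformed metric near $P$ are carried from those of $g$. Lemma~\ref{L2.1}(2) then gives
$$
\tilde\Theta_{\mathbb{M}_\eta}=\eta_{\mathbb{M}}^{1-m}\,r_{\mathbb{M}}^{m-1}\,\psi_{\mathbb{M}}^{m-1}\,\tilde\Theta_{\mathbb{M}},
$$
a product in which $\eta_{\mathbb{M}}$, $r_{\mathbb{M}}$ and $\psi_{\mathbb{M}}$ are functions of $r_{\mathbb{M}}$ alone and $\tilde\Theta_{\mathbb{M}}$ is radial by the hypothesis that $\mathbb{M}$ is central harmonic at $P$; hence $\tilde\Theta_{\mathbb{M}_\eta}$ is radial. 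I do not expect a substantive obstacle: once the dictionary $\Psi\leftrightarrow\eta_\psi$ is in place and the constant normalization is absorbed, the statement is immediate, the only point to watch being that $\eta$ is a genuine radial-coordinate change (ensured by $\dot\eta=\psi>0$) so that $\eta_{\mathbb{M}}$ is a bona fide geodesic distance for $\Psi^2g$.
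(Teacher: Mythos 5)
Your argument is correct and is essentially the paper's own proof: the paper likewise writes $\Psi=\psi\circ r_{\mathbb{M}}$ for a smooth even $\psi$, sets $\eta_\psi(r)=\int_0^r\psi(t)\,dt$, and observes that the two formalisms are equivalent so that Lemma~\ref{L2.1} applies. Your extra care with the normalization $\dot\eta(0)=1$ (absorbed via the constant factor $\Psi(P)$, or by noting the proof of Lemma~\ref{L2.1}(3) never uses it) addresses a point the paper passes over silently but does not change the route.
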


\subsection{Solving an ODE}\label{S2.2}
The proof of the following result was shown to us by J. \'Alvarez-L\'opez~\cite{AL2022}.

\begin{lemma}\label{L2.3}\rm Let $f_i(r)$ be positive smooth even functions of one variable which are defined
for $0\le r\le\varepsilon$ and which satisfy $f_i(0)=1$. Then there exists $0<\delta\le\varepsilon$
and a smooth odd function $\eta$
which is defined for $0\le r\le\delta$ so that $\dot\eta(0)=1$ and so that
\begin{equation}\label{E2.c}
f_1(\eta(r))=\eta(r)^{1-m}r^{m-1}\dot\eta(r)^{m-1}f_2(r)\quad\text{for}\quad0\le r\le\delta\,.
\end{equation}
\end{lemma}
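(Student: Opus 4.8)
The plan is to take $(m-1)$-st roots so that (\ref{E2.c}) becomes a single first-order separable ODE, solve that ODE by an explicit integrating-factor computation that absorbs the logarithmic singularity at $r=0$, and then exhibit $\eta$ as a composition of odd local diffeomorphisms. Write $p:=m-1\ge3$. Since each $f_i$ is smooth, positive and even with $f_i(0)=1$, the function $F_i:=f_i^{1/p}$ is again smooth, positive and even with $F_i(0)=1$. Raising (\ref{E2.c}) to the power $1/p$ and using positivity, I would first observe that (\ref{E2.c}) is equivalent to $F_1(\eta)\,\eta=r\,\dot\eta\,F_2(r)$, and hence, for $r,\eta>0$, to the separated relation $\frac{d\eta}{\eta F_1(\eta)}=\frac{dr}{r F_2(r)}$.

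The essential device is to regularize the singularity of the integrands at the origin. I would write $\frac{1}{sF_i(s)}=\frac1s+h_i(s)$ with $h_i(s):=\frac{1-F_i(s)}{sF_i(s)}$. Because $F_i$ is even with $F_i(0)=1$, the numerator $1-F_i(s)$ is even and vanishes at $0$, so it equals $s^2$ times a smooth even function; dividing by $sF_i(s)$ shows that $h_i$ is a \emph{smooth odd} function. Consequently $G_i(s):=\log s+H_i(s)$, where $H_i(s):=\int_0^s h_i$, is a primitive of $\frac{1}{sF_i(s)}$ on $(0,\varepsilon]$, and $H_i$ is a smooth even function with $H_i(0)=0$ (the integral of an odd function being even).

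Integrating the separated equation gives $G_1(\eta)=G_2(r)+C$, i.e. $\eta\,e^{H_1(\eta)}=e^{C}\,r\,e^{H_2(r)}$, which I abbreviate as $\Phi(\eta)=e^{C}\Psi(r)$ with $\Phi(s):=s\,e^{H_1(s)}$ and $\Psi(r):=r\,e^{H_2(r)}$. Each of $\Phi,\Psi$ is smooth, odd, vanishes at $0$, and has derivative $e^{H_i(0)}=1$ at $0$, hence is a local diffeomorphism near the origin whose inverse is again smooth and odd. Letting $r\to0^+$ in $\Phi(\eta)=e^{C}\Psi(r)$ gives $\eta\sim e^{C}r$, so $\dot\eta(0)=e^{C}$; the normalization $\dot\eta(0)=1$ forces $C=0$. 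I then \emph{define} $\eta:=\Phi^{-1}\circ\Psi$ on $[0,\delta]$, choosing $\delta\le\varepsilon$ small enough that $\Psi([0,\delta])$ lies in the range of $\Phi$ and $\eta([0,\delta])\subseteq[0,\varepsilon]$. As a composition of odd local diffeomorphisms with unit derivative at $0$, the function $\eta$ is smooth and odd with $\eta(0)=0$ and $\dot\eta(0)=(\Phi^{-1})'(0)\,\Psi'(0)=1$; shrinking $\delta$ keeps $\dot\eta>0$. Finally, differentiating the identity $G_1(\eta(r))=G_2(r)$ recovers $\frac{\dot\eta}{\eta F_1(\eta)}=\frac{1}{rF_2(r)}$, i.e. $F_1(\eta)\,\eta=r\,\dot\eta\,F_2(r)$, and raising to the power $p$ returns (\ref{E2.c}).

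I expect the main obstacle to be exactly the singularity of the naive ODE $\dot\eta=\eta F_1(\eta)/\bigl(rF_2(r)\bigr)$ at $r=0$, where standard existence theory does not apply directly. The trick of subtracting $\tfrac1s$ to expose a smooth odd remainder $h_i$ is what forces the primitives $G_i$ to split as $\log s$ plus a smooth even piece, and it is precisely this decomposition that guarantees $\eta$ extends smoothly and oddly across $r=0$ with the prescribed value of $\dot\eta(0)$. Verifying these regularity and parity assertions (smoothness of $h_i$, evenness of $H_i$, oddness of $\Phi^{-1}\circ\Psi$) is the crux of the argument; the algebraic manipulations are routine.
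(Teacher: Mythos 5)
Your argument is correct and follows essentially the same route as the paper's proof: take $(m-1)$-st roots, separate variables into $\frac{d\eta}{\eta F_1(\eta)}=\frac{dr}{rF_2(r)}$, and split each integrand as $\frac1s$ plus a smooth odd remainder so that the primitives are $\log s$ plus a smooth even function vanishing at the origin. The only cosmetic difference is the last step: the paper substitutes $\eta=r\beta$ and solves $\alpha_2(r)=\ln|\beta|+\alpha_1(r\beta)$ for $\beta$ by the implicit function theorem, whereas you exponentiate and exhibit $\eta=\Phi^{-1}\circ\Psi$ directly as a composition of odd local diffeomorphisms with unit derivative at $0$ --- both yield the same smooth odd solution with $\dot\eta(0)=1$.
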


\begin{proof} Set
$\phi_i:=f_i^{\frac1{1-m}}$. Then Equation~(\ref{E2.c}) is equivalent to
\begin{equation}\label{E2.d}
\frac1{\phi_1(\eta(r))}=\frac{\dot\eta(r)r}{\eta(r)\phi_2(r)}\quad\text{i.e.}\quad
\frac{\phi_2(r)}r=\frac{\dot\eta(r)\phi_1(\eta(r))}{\eta(r)}\,.
\end{equation}
Multiplying Equation~(\ref{E2.d}) by $dr$ and noting $\dot\eta dr=d\eta$ yields the equivalent relation
\begin{equation}\label{E2.e}
\frac{\phi_2(r)dr}r=\frac{\phi_1(\eta)d\eta}{\eta}\quad\text{i.e.}\quad
\int\frac{\phi_2(r)dr}r=\int\frac{\phi_1(\eta)d\eta}{\eta}+C\,.
\end{equation}
Because $\phi_i$ are even functions with $\phi_i(0)=1$, we may express
$\phi_i(r)=1+r^2\Phi_i(r)$ to rewrite Equation~(\ref{E2.e}) in the form
\begin{equation}\label{E2.f}\begin{array}{l}
\displaystyle\int\frac{(1+r^2\Phi_2(r))dr}r=\int\frac{(1+\eta^2\Phi_1(\eta))d\eta}{\eta_{\vphantom{\vrule height 12pt}}}+C\quad\text{i.e.}\\
\displaystyle\ln|r|+\int r\Phi_2(r)dr=\ln(|\eta|)+\int\eta\Phi_1(\eta)d\eta+C\,.
\end{array}\end{equation}
We set
$$\alpha_i(r):=\int_{t=0}^r t\Phi_i(t)dt\quad\text{and}\quad\eta(r)=r\beta(r)\,.
$$
We then have that $\alpha_i$ is a smooth even function with $\alpha_i(0)=0$.
We may rewrite
Equation~(\ref{E2.f}) in the form
\begin{equation}\label{E2.g}
\ln(|r|)+\alpha_2(r)=\ln(|r|)+\ln(|\beta(r)|)+\alpha_1(r\beta(r))\,.
\end{equation}
Equation~(\ref{E2.g}) is then equivalent to
the relation $G(r,\beta(r))=0$ where
\begin{equation}\label{E2.h}
G(r,\beta):=\alpha_2(r)-\ln(|\beta|)-\alpha_1(r\beta)\,.
\end{equation}
Equation~(\ref{E2.h}) is solved when $r=0$ and $\beta=1$. We compute
 $$
\partial_\beta G(r,\beta)\bigg|_{r=0,\beta=1}=\left.\left\{-\frac1\beta-r\dot\alpha_1(r\beta)\right\}\right|_{r=0,\beta=1}\ne0\,.
 $$
Thus we may use the implicit function theorem
to solve Equation~(\ref{E2.h}) near the point $(r=0,\beta=1)$; the solution is unique and a smooth function of $r$;
if the data is real analytic, then $\beta$ is real analytic. Since
the functions $\alpha_i$ are even functions of $r$, it follows $\beta$ is an even function of $r$ and hence
$\eta$ is an odd function of $r$ with $\dot\eta(0)=1$.
\end{proof}

\subsection{The proof of Theorem~\ref{T1.2}}\label{S2.3}
Assertions~(1) and (3) of Theorem~\ref{T1.2} follow immediately from Lemma~\ref{L2.3}.
If $\Xi=1$, it is not necessary to localize. If we set $f_1\equiv1$ in Lemma~\ref{L2.3},
then $\Phi_1\equiv0$, $\alpha_1\equiv0$, and
Equation~(\ref{E2.g}) simplifies to become $\beta(r)=e^{\alpha_2(r)}$; since $\alpha_2(0)=0$, $\beta(0)=1$.
Thus we can find $\psi$ which is defined on all of $B_{\mathbb{M},P}$
so that $\tilde\Theta_{\mathbb{M}_\psi,0}\equiv1$; it is not necessary to invoke the implicit function
theorem and work locally.\qed

\subsection{The proof of Lemma~\ref{L1.7}}\label{S2.4} {We have by
Definition~\ref{D1.5} that $\mathbb{M}_{1,5}$ is a small geodesic
ball in $\mathbb{N}_{1,4}\times\mathbb{R}$.}
 Equation~(\ref{E1.a}) shows $\mathbb{M}_{1,5}$ is central
 harmonic about the origin and that
 $\mathcal{W}_{\mathbb{M}_{1,5}}$ is nowhere
 vanishing. Nikolayevsky~\cite{N05} has shown that
 every harmonic space of dimension 5 is a space form and
 hence conformally flat. Thus $\mathbb{M}_{1,5}$ is nowhere Weyl curvature isomorphic to
 a radial conformal deformation of a harmonic space; consequently not all central harmonic
 spaces arise as radial conformal deformations of harmonic spaces.\qed

 \medbreak $\mathbb{N}_{1,4}$ is defined by a conformal radial deformation
 $\phi_{1,4}$ of the Fubini-Study metric $B_{\mathbb{CP}^2,P}$ which
is described as follows.

\begin{lemma}\label{L2.4}
\rm Let $\mathbb{M}=\mathbb{CP}^2$ and let
\begin{eqnarray*}
\phi_{1,4}(r):=&&3^{\frac34}e^{\frac{\pi}{2\sqrt3}}\sin (r) \exp\left(-\frac{1}{2} \sqrt{3} \tan ^{-1}\left(\frac{2 \cos ^{\frac{2}{3}}(r)+1}{\sqrt{3}}\right)\right)\\
\\&&\cdot\left(\sqrt{1-\cos ^{\frac{2}{3}}(r)} \sqrt[3]{\cos (r)} \left(\cos ^{\frac{2}{3}}(r)+\cos ^{\frac{4}{3}}(r)+1\right)^{5/4}\right)^{-1}
\end{eqnarray*}
for $r<\frac\pi2$. Then $\phi_{1,4}(0)=1$ and $\tilde\Theta_{\mathbb{M}_{\phi_{1,4}}}=1$ on $B_{\mathbb{CP}^2,P}$.
\end{lemma}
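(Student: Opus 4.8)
The plan is to apply the explicit solution of the ODE worked out in the proof of Theorem~\ref{T1.2} to the special case $\Xi\equiv1$, and then to evaluate the resulting integral in closed form. By Lemma~\ref{L2.1}(2), if we write $\psi_{\mathbb{M}}=\dot\eta\circ r_{\mathbb{M}}$ for the radial conformal factor, the volume density transforms as $\tilde\Theta_{\mathbb{M}_\eta}=\eta_{\mathbb{M}}^{1-m}r_{\mathbb{M}}^{m-1}\psi_{\mathbb{M}}^{m-1}\tilde\Theta_{\mathbb{M}}$. For $\mathbb{CP}^2$ we have $m=4$ and, from the table in Equation~(\ref{E1.b}), $\Theta_{\mathbb{M},P}=\sin(r)^{3}\cos(r)$, so $\tilde\Theta_{\mathbb{M}}=r^{1-m}\Theta_{\mathbb{M}}=r^{-3}\sin^3(r)\cos(r)$. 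Setting $\tilde\Theta_{\mathbb{M}_\eta}\equiv1$ and taking $f_1\equiv1$, $f_2=\tilde\Theta_{\mathbb{M}}$ in Lemma~\ref{L2.3}, the analysis in Section~\ref{S2.3} shows the solution is given by $\eta(r)=r\beta(r)$ with $\beta(r)=e^{\alpha_2(r)}$, where $\alpha_2(r)=\int_0^r t\Phi_2(t)\,dt$ and $\phi_2=f_2^{1/(1-m)}=(\tilde\Theta_{\mathbb{M}})^{-1/3}=1+r^2\Phi_2(r)$.

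The first concrete step is to integrate Equation~(\ref{E2.e}) directly rather than passing through $\alpha_2$: in the $\Xi\equiv1$ case Equation~(\ref{E2.e}) reads $\int \phi_2(r)\,dr/r = \ln|\eta|+C$, i.e. $\eta(r)=\exp\!\left(\int \phi_2(r)\,dr/r\right)$ up to the multiplicative constant fixed by $\dot\eta(0)=1$. Here $\phi_2(r)=\big(r^{-3}\sin^3(r)\cos(r)\big)^{-1/3}=r\,\sin^{-1}(r)\cos^{-1/3}(r)$, so the integral to evaluate is $\int \sin^{-1}(r)\cos^{-1/3}(r)\,dr$. Once $\eta$ is known, the desired warping function is $\phi_{1,4}=\psi_{\mathbb{M}}=\dot\eta$, and the claimed formula should be verified by differentiating the closed form for $\eta$ — equivalently, by checking that the stated $\phi_{1,4}$ satisfies Equation~(\ref{E2.e}).

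The main computational obstacle is the antiderivative $\int \csc(r)\cos^{-1/3}(r)\,dr$, whose appearance of the $\arctan$ term with argument $(2\cos^{2/3}(r)+1)/\sqrt3$ and the factor $(\cos^{2/3}(r)+\cos^{4/3}(r)+1)^{5/4}$ strongly suggests the substitution $u=\cos^{2/3}(r)$ (so that $u^{3/2}=\cos r$ and $\cos^{2/3}+\cos^{4/3}+1=1+u+u^2$ factors the relevant cubic). Under this substitution the integrand becomes a rational-algebraic expression whose partial-fraction decomposition over $1+u+u^2$ produces exactly the logarithmic and $\arctan$ pieces seen in $\phi_{1,4}$. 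Rather than present this calculation in full, I would verify the claim in the reverse direction: differentiate the stated expression for $\phi_{1,4}(r)$ and confirm it equals $\dot\eta(r)$, equivalently substitute $\eta,\dot\eta$ into Equation~(\ref{E2.c}) with $m=4$, $f_1\equiv1$, $f_2=r^{-3}\sin^3(r)\cos(r)$ and check the identity holds on $0<r<\tfrac\pi2$.

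Finally, the normalization $\phi_{1,4}(0)=1$ follows by a short limiting computation: near $r=0$ one has $\sin r\sim r$, $\cos r\to1$, $1-\cos^{2/3}r\sim \tfrac13 r^2$, and $(\cos^{2/3}r+\cos^{4/3}r+1)^{5/4}\to 3^{5/4}$, while the $\arctan$ argument tends to $\sqrt3$ so $\tan^{-1}(\sqrt3)=\pi/3$ and the exponential prefactor combines with $e^{\pi/(2\sqrt3)}$ to give $1$; the constant $3^{3/4}$ is precisely what cancels the $3^{5/4}$ and the $\sqrt{1/3}$ arising from $\sqrt{1-\cos^{2/3}r}\,/\,r$. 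This fixes $\dot\eta(0)=1$ and completes the verification.
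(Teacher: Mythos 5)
Your proposal is correct and follows essentially the same route as the paper: both specialize Equation~(\ref{E2.c}) to $f_1\equiv1$, $m=4$, $f_2=r^{-3}\sin^3(r)\cos(r)$, obtaining the separable ODE $\dot\eta/\eta=\csc(r)\cos^{-1/3}(r)$, integrate it in closed form, set $\phi_{1,4}=\dot\eta$, and fix the constant so that $\phi_{1,4}(0)=1$. The only (cosmetic) difference is that the paper delegates the antiderivative to Mathematica and checks regularity at $r=0$ by a power-series expansion, whereas you exhibit the substitution $u=\cos^{2/3}(r)$ --- which indeed reduces the integral to $-\tfrac32\int du/(1-u^3)$ and explains the $\arctan$ and $(1+u+u^2)$ factors --- and verify the normalization by a direct limit.
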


\begin{proof} By Equation~(\ref{E1.b}),
$r^3\tilde\Theta_{\mathbb{CP}^2}=\Theta_{\mathbb{CP}^2}=\sin^3(r)\cos(r)$. Consequently,
Equation~(\ref{E2.c}) becomes $1=\dot\eta^3\eta^{-3}\sin^3\cos(r)$.
Mathematica solves this equation to yield
\begin{eqnarray*}
\eta(r)&=&c_1 \exp \left(\frac12 \log \left(1-\cos ^{\frac{2}{3}}(r)\right)\right)\\
&&\cdot\exp\left(-\frac14\log \left(\cos ^{\frac{2}{3}}(r)+\cos ^{\frac{4}{3}}(r)+1\right)\right)\\
&&\cdot\exp\left(-\frac{\sqrt{3}}2 \tan ^{-1}\left(\frac{2 \cos ^{\frac{2}{3}}(r)+1}{\sqrt{3}}\right)\right)
\end{eqnarray*}
and consequently
\begin{eqnarray*}
\phi_{1,4}(r)&=&c_1\sin (r) \exp\left(-\frac{ \sqrt{3}}{2} \tan ^{-1}\left(\frac{2 \cos ^{\frac{2}{3}}(r)+1}{\sqrt{3}}\right)\right)\\
&&\cdot\left(\sqrt{1-\cos ^{\frac{2}{3}}(r)} \sqrt[3]{\cos (r)} \left(\cos ^{\frac{2}{3}}(r)+\cos ^{\frac{4}{3}}(r)+1\right)^{5/4}\right)^{-1}\,.
\end{eqnarray*}
This is defined for $0<r<\frac\pi2$; there is an apparent singularity at $r=0$ which we
ignore for the moment. We set $c_1=3^{3/4} e^{\frac{\pi }{2 \sqrt{3}}}$ and expand $\phi_{1,4}(r)$ for $r>0$:
\begin{eqnarray*}
\phi_{1,4}(r)&=&1+\frac12r^2+\frac{13}{72}r^4+\frac{1177}{19440}r^6+\frac{7369}{362880}r^8
+\frac{681907}{97977600}r^{10}+O(r^{12})\,.
\end{eqnarray*}
We conclude that $\phi_{1,4}$ is regular at $0$ with $\phi_{1,4}(0)=1$ and thus this is the
radial conformal deformation given by Theorem~\ref{T1.2}.
\end{proof}

\begin{remark}\rm The injectivity radius of $\mathbb{CP}^2$ is $\frac\pi2$. Since $\lim_{r\rightarrow\frac\pi2}\psi(r)=\infty$,
$\psi$ does not extend to all of $\mathbb{CP}^2$. Since
$$\psi(r)\sim\frac{3^{3/4} e^{\frac{\pi }{4 \sqrt{3}}}}{\{\frac\pi2-r\}^{\frac13}}+O(1)\quad\text{as}\quad r\rightarrow\frac\pi2\,,
$$ $\psi$ is integrable
on $[0,\frac\pi2]$ so
the deformed metric on $B_{\mathbb{CP}^2,P}$ is geodesically incomplete.
\end{remark}
%%?? Start here
\section{Prescribing the volume density asymptotics}\label{S3}

In Section~\ref{S3.1} we establish the first four Assertions of Theorem~\ref{T1.4}
and in Section~\ref{S3a}, we estsablish the first two Assertions of Theorem~\ref{T1.6}.
The heart of the matter, of course, is to distinguish the manifolds $\mathbb{M}_{i,m,\vec{\mathfrak{H}}}$.
In Section~\ref{S3.2}, we review some facts concerning the Weyl conformal curvature
operator. In Section~\ref{SX3}, we complete the proof of
Theorem~\ref{T1.4} and
in Section~\ref{S3.3}, we complete the proof of Theorem~\ref{T1.6}.

\subsection{The proof of Assertions~(1)--(4) of Theorem~\ref{T1.4}}\label{S3.1}
Since the underlying manifold is unchanged, and since $\mathbb{CP}^k$, $\mathbb{HP}^k$, and
$\mathbb{OP}^2$ are compact, it is immediate that $\mathbb{M}_{i,m,\vec{\mathfrak{H}}}$ is compact
for $1\le i\le3$.  If $i\ge4$, then $\mathbb{M}_{i,m}$ is a homogeneous space and hence
geodesically complete. Since the warping function $\psi$ is $1$ outside a compact set,
it is follows that $\mathbb{M}_{i,m,\vec{\mathfrak{h}}}$ is geodesically complete for $4\le i\le 7$. This establishes
Assertions~(1) and (2) of Theorem~\ref{T1.4}.

Assertion~(3) of Theorem~\ref{T1.4} follows from Corollary~\ref{C2.2};
a radial conformal deformation of a central harmonic space is again central harmonic.
Let $\mathfrak{H}=\{\mathfrak{H}_0,\dots\}$
where $\mathfrak{H}_0=1$ and $\mathfrak{H}_\nu=0$ if $\nu$ is odd.
Any formal Taylor series can be realized. Thus, we can find the germ of a smooth positive
even function $\Xi$ of 1-real variable so that $\Xi(t)\sim\sum_{\nu=0}^\infty\mathfrak{H}_\nu t^\nu$.
We apply Theorem~\ref{T1.2} to find the germ of a radial function $\psi$ so $\tilde\Theta_{\mathbb{M}_\psi}(r_{\mathbb{M}_\psi})=\Xi(r_{\mathbb{M}_\psi})$ and thus $\tilde\Theta_{\mathbb{M}_\psi}$ has the right asymptotic
coefficients. By using a partition
of unity, we may assume $\psi(r)=1$ for $r\ge\varepsilon$. Assertion~(4) of Theorem~\ref{T1.4} then follows.

\subsection{The proof of Assertions~(1,2) of Theorem~\ref{T1.6}}\label{S3a} Let $m\ge5$ be odd
and let $\mathbb{M}=(M,g):=\mathbb{M}_{i,m}$.
It is obvious from the definition that $\mathbb{M}$ is central harmonic at the center $0$
of the small geodesic ball defining $M$. We argue as above to find the germ of a radial function
$\psi$ so $\tilde\Theta_{\mathbb{M}_\psi}$ has the right asymptotic coefficients. By using
a partition of unity, we can suppose that $\psi$ grows sufficiently rapidly at the boundary of
$M$ and hence $\mathbb{M}_\psi$ is godesically complete.

\subsection{The Weyl tensor}\label{S3.2}
Let $\rho$ be {the} Ricci tensor and let $\tau$ be {the} scalar curvature.  Let
\begin{eqnarray*}
&&W(x,y,z,w):=R(x,y,z,w)
+\tau\frac{g(x,w)g(y,z)-g(x,z)g
(y,w)}{(m-1)(m-2)}\\
&&\qquad+\frac{\rho(x,z)g(y,w)+\rho(y,w)g(x,w)
-\rho(y,z)g(x,w)-\rho(x,w)g(y,z)}{m-2}
\end{eqnarray*}
be the {\it Weyl conformal curvature tensor}.
The {\it Weyl conformal curvature operator} $\mathcal{W}$ is the skew-symmetric operator
which is characterized by the relation
$$
g(\mathcal{W}(x,y)z,w)=W(x,y,z,w)\,.
$$
The
{\it Weyl Jacobi operator} $\mathcal{J}^W$ is the self-adjoint operator defined by
$$
\mathcal{J}^W(x)y:=\mathcal{W}(y,x)x\,.
$$
We say that $\mathbb{M}$ is {\it conformally flat} if $\mathbb{M}$ is isometric to $\mathbb{R}^m_\psi$ for
some $\psi$. The following result is well known.

\begin{lemma}\label{L3.1}\rm Let $\mathbb{M}$ be a Riemannian manifold of dimension $m\ge4$.
Then
$$
W_{\mathbb{M}_\psi}=\psi^2W_{\mathbb{M}},\quad
\mathcal{W}_{\mathbb{M}_\psi}=\mathcal{W}_{\mathbb{M}},\quad\text{and}\quad
\mathcal{J}^W_{\mathbb{M}_\psi}=\mathcal{J}^W_{\mathbb{M}}\,.
$$
 Furthermore, $\mathbb{M}$ is conformally flat
if and only if $\mathcal{W}_{\mathbb{M}}$ vanishes identically.
\end{lemma}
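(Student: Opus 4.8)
The plan is to dispatch the three tensorial identities by the standard conformal transformation computation for the curvature tensor, and then to treat the conformal-flatness equivalence separately, since it is the only part that requires genuine work. Throughout I write $\psi=e^f$, so that $\mathbb{M}_\psi=(M,e^{2f}g)$, I let $\nabla^2f$, $df$ and $|\nabla f|^2$ denote the Hessian, differential and squared gradient of $f$ taken with respect to $g$, and I decorate quantities of $\mathbb{M}_\psi$ with an overline.

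First I would record the decomposition of the curvature tensor into its Weyl part and a trace part built from the Schouten tensor $P:=\frac1{m-2}\bigl(\rho-\frac{\tau}{2(m-1)}g\bigr)$, namely $R_{ijkl}=W_{ijkl}+P_{ik}g_{jl}+P_{jl}g_{ik}-P_{il}g_{jk}-P_{jk}g_{il}$, in which $W$ is, up to the sign conventions in force, exactly the tensor written in the statement. The computation itself is classical: the Levi-Civita connection of $e^{2f}g$ differs from that of $g$ by $\delta^k_if_j+\delta^k_jf_i-g_{ij}f^k$, and feeding this into the definition of $R$ produces the identity $\overline R_{ijkl}=e^{2f}\bigl(R_{ijkl}-S_{ik}g_{jl}-S_{jl}g_{ik}+S_{il}g_{jk}+S_{jk}g_{il}\bigr)$ with $S_{ij}:=\nabla_i\nabla_jf-f_if_j+\frac12|\nabla f|^2g_{ij}$. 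Tracing this relation gives the transformation laws for $\overline\rho$ and $\overline\tau$, hence $\overline P=P-S$; substituting back, every $f$-dependent correction is absorbed into the Schouten part and one is left with $\overline W_{ijkl}=e^{2f}W_{ijkl}=\psi^2W_{ijkl}$, which is the first identity. The two operator statements are then purely formal. Since $\overline g=\psi^2g$ we have $\overline g(\mathcal{W}_{\mathbb{M}_\psi}(x,y)z,w)=\overline W(x,y,z,w)=\psi^2W(x,y,z,w)=\psi^2g(\mathcal{W}_{\mathbb{M}}(x,y)z,w)=\overline g(\mathcal{W}_{\mathbb{M}}(x,y)z,w)$; cancelling $\psi^2$ and using nondegeneracy of $\overline g$ forces $\mathcal{W}_{\mathbb{M}_\psi}=\mathcal{W}_{\mathbb{M}}$, whereupon $\mathcal{J}^W_{\mathbb{M}_\psi}=\mathcal{J}^W_{\mathbb{M}}$ is immediate from $\mathcal{J}^W(x)y=\mathcal{W}(y,x)x$.

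For the final equivalence one direction is free: if $\mathbb{M}$ is conformally flat then, by definition, it is isometric to some $\mathbb{R}^m_\psi$, and since flat space has vanishing Weyl tensor the conformal invariance just proved gives $\mathcal{W}_{\mathbb{M}}=\mathcal{W}_{\mathbb{R}^m_\psi}=\mathcal{W}_{\mathbb{R}^m}=0$. The converse is the Weyl--Schouten theorem, and this is where the real difficulty lies. I would argue that contracting the second Bianchi identity produces a divergence formula of the form $\nabla^lW_{ijkl}=(m-3)\,C_{ijk}$, where $C$ is the Cotton tensor, so that for $m\ge4$ the hypothesis $\mathcal{W}_{\mathbb{M}}\equiv0$ forces $C\equiv0$. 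The vanishing of $C$ is precisely the integrability condition under which the overdetermined system $\nabla^2f-df\otimes df+\frac12|\nabla f|^2g=-P$ is locally solvable for $f$; for such an $f$ the transformation law above gives $\overline P=0$ and hence $\overline W=0$ with $\overline g$ flat, so $\mathbb{M}$ is conformally flat. The main obstacle is exactly this Frobenius/integrability step, which I would not reproduce but rather cite from the standard literature on conformal geometry, emphasising that the hypothesis $m\ge4$ enters twice: once to make the factor $\frac1{m-2}$ in $P$ meaningful, and once to extract $C\equiv0$ from the vanishing of $\mathcal{W}_{\mathbb{M}}$.
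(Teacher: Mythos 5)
Your proposal is correct. Note that the paper itself offers no argument here at all: it simply declares the lemma ``well known,'' so there is no proof to compare against, and what you have written is precisely the standard argument being invoked --- the conformal transformation law $\overline R=e^{2f}\bigl(R-S\owedge g\bigr)$ with the correction absorbed into the Schouten tensor to give $\overline W=\psi^2W$, the purely formal passage to the operators $\mathcal{W}$ and $\mathcal{J}^W$ using $\overline g=\psi^2g$, and the Weyl--Schouten theorem for the converse of the last assertion. You correctly isolate the only genuinely nontrivial step (the integrability of the system $S=P$ once the Cotton tensor vanishes, with $C\equiv0$ extracted from $\nabla^lW_{ijkl}=(m-3)C_{ijk}$ when $m\ge4$) and it is reasonable to cite it rather than reprove it. One small caveat: the paper's definition of ``conformally flat'' (isometric to $\mathbb{R}^m_\psi$) reads as a global condition, under which the ``if'' direction would fail (e.g.\ for the round sphere); your proof, like the classical theorem, yields \emph{local} conformal flatness, which is clearly the intended meaning.
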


\subsection{The proof of Theorem~\ref{T1.4}~(4)}\label{SX3}
 We examine the eigenvalue structrure of the Jacobi operator $\mathcal{J}$
and the conformal Jacobi operator $\mathcal{J}^W$of the spaces $\mathbb{M}_{i,m}$ for $m$ even.
\begin{lemma}\label{L3.2}\rm Let $\mathbb{M}=(M,g)$ be a rank 1 symmetric space with positive curvature and
let $x\in T_P(M)$ and $y\in T_P(M)$ be a unit tangent vectors. Let $k\ge2$.
\begin{enumerate}
\item $\mathcal{J}(x)$ is a self-adjoint operator with eigenvalues $\{0,1,4\}$ and corresponding
eigenspace decomposition of $T_P(M)=E_0(x)\oplus E_1(x)\oplus E_4(x)$. Let $y$ be a unit
tangent vector. We have
$$\begin{array}{| l | c | c | c | c |}\noalign{\hrule}
\mathbb{M}&\dim\{E_0(x)\}&\dim\{E_1(x)\}&\dim\{E_4(x)\}&\rho(y,y)\\ \noalign{\hrule}
\mathbb{R}^m&m&0&0&0\\ \noalign{\hrule}
\mathbb{CP}^k&1&2k-2&1&2k+2\\ \noalign{\hrule}
\mathbb{HP}^k&1&4k-4&3&4k+8\\ \noalign{\hrule}
\mathbb{OP}^2&1&8&7&36\\ \noalign{\hrule}
\end{array}.$$
\item The decomposition of Assertion~(1) gives the eigenspace decomposition of
$\mathcal{J}^W(x)$. The corresponding eigenvalues $\lambda_i$ are given by:
$$\begin{array}{| l | c | c | c |}\noalign{\hrule}
\mathbb{M}&\lambda_0&\lambda_1&\lambda_4\\ \noalign{\hrule}
\mathbb{R}^m&0&0&0\\ \noalign{\hrule}
\mathbb{CP}^k&0&1-\frac{2k+2}{2k-1}&4-\frac{2k+2}{2k-1}  \\[0.02in]\noalign{\hrule}
\mathbb{HP}^k&0&1-\frac{4k+8}{4k-1}&4-\frac{4k+8}{4k-1}  \\[0.02in]\noalign{\hrule}
\mathbb{OP}^2&0&1-\frac{36}{15}&4-\frac{36}{15}   \\[0.02in]\noalign{\hrule}
\end{array}$$
\end{enumerate}
\end{lemma}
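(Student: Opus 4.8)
The plan is to handle the two assertions separately: Assertion~(1) is read off from the known curvature of the rank~1 symmetric spaces, while Assertion~(2) follows by simplifying the Weyl tensor using the Einstein condition and then diagonalizing it on the decomposition already produced in Assertion~(1).

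For Assertion~(1), I would first invoke two-point homogeneity: the spaces $\mathbb{CP}^k$, $\mathbb{HP}^k$, $\mathbb{OP}^2$ are isotropic, so the eigenvalue structure of $\mathcal{J}(x)$ is independent of the unit vector $x$ and of $P$, and it suffices to analyze a single $x$. The radial direction always satisfies $\mathcal{J}(x)x=R(x,x)x=0$ and contributes the eigenvalue $0$. On $x^\perp$ the eigenvalues of $\mathcal{J}(x)$ are the sectional curvatures $R(e,x,x,e)$ for unit $e\perp x$: for the normalization fixed in Equation~(\ref{E1.b}) these equal $4$ along the ``complex-type'' directions---$Jx$ for $\mathbb{CP}^k$, the span of $Ix,Jx,Kx$ for $\mathbb{HP}^k$, and the seven imaginary-octonion directions for $\mathbb{OP}^2$---and $1$ along the complementary totally-real directions, which gives the multiplicities in the table. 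A convenient internal check is that a Jacobi field of transverse curvature $\lambda$ has length $\sin(\sqrt\lambda\,r)/\sqrt\lambda$, so the product of lengths over the eigendirections reproduces exactly the factorizations of $\Theta_{\mathbb{M},P}$ recorded in Equation~(\ref{E1.b}); this pins down the multiplicities unambiguously. Finally $\rho(y,y)=\Tr\{\mathcal{J}(y)\}$ is the multiplicity-weighted sum of eigenvalues, yielding $2k+2$, $4k+8$, and $36$; the flat case is immediate.

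For Assertion~(2), the key observation is that each of these spaces is Einstein---indeed Assertion~(1) already shows $\rho(y,y)=\kappa$ is the same for every unit $y$, whence $\rho=\kappa g$ and $\tau=m\kappa$. Substituting $\rho=\kappa g$ into the definition of $W$ collapses every Ricci and scalar term, and evaluating the result on $(y,x,x,w)$ gives, after a short computation,
\[
W(y,x,x,w)=R(y,x,x,w)-\frac{\kappa}{m-1}\bigl(g(y,w)-g(y,x)g(x,w)\bigr).
\]
Translated to operators this reads
\[
\mathcal{J}^W(x)=\mathcal{J}(x)-\frac{\kappa}{m-1}\bigl(\mathrm{Id}-P_x\bigr),
\]
where $P_x$ is orthogonal projection onto $\mathrm{span}(x)$. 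Since $\mathcal{J}(x)$ and $P_x$ commute, the decomposition $E_0(x)\oplus E_1(x)\oplus E_4(x)$ of Assertion~(1) simultaneously diagonalizes $\mathcal{J}^W(x)$: on the radial line $E_0=\mathrm{span}(x)$ both $\mathcal{J}(x)$ and $\mathrm{Id}-P_x$ annihilate $x$, giving $\lambda_0=0$, while on $E_\lambda\subset x^\perp$ the eigenvalue becomes $\lambda-\tfrac{\kappa}{m-1}$. Inserting $m=2k,4k,16$ and $\kappa=2k+2,4k+8,36$ produces exactly the second table, and $\mathcal{W}_{\mathbb{R}^m}=0$ gives the flat row.

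The only delicate point is the radial direction. For the projective spaces $E_0(x)$ is precisely $\mathrm{span}(x)$, and one must \emph{not} apply the naive shift $\lambda\mapsto\lambda-\tfrac{\kappa}{m-1}$ there: the vector $x$ lies in the kernel of both $\mathcal{J}(x)$ and $\mathcal{W}(\cdot,x)x$, so its Weyl eigenvalue is $0$, not $-\tfrac{\kappa}{m-1}$. The projection term $P_x$ in the operator identity is exactly what records this distinction, so keeping the full operator form---rather than only the restriction to $x^\perp$---removes the ambiguity. Beyond this bookkeeping the argument is a routine substitution, the essential input being the classical sectional-curvature data of the rank~1 symmetric spaces.
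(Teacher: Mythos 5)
Your proof is correct and follows essentially the same route as the paper: Assertion~(1) rests on the classical eigenstructure of the Jacobi operator for the rank~1 symmetric spaces, and Assertion~(2) uses the Einstein condition to write $\mathcal{J}^W(x)$ as $\mathcal{J}(x)$ minus $\tfrac{\kappa}{m-1}$ times the sphere's Jacobi operator $\mathrm{Id}-P_x$, exactly the subtraction the paper performs. Your consistency check of the multiplicities against the factorizations of $\Theta_{\mathbb{M},P}$ in Equation~(\ref{E1.b}) is a nice self-contained substitute for the paper's citations to Brown--Gray and Nikolayevsky in the $\mathbb{OP}^2$ case.
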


\begin{proof} The eigenvalue and eigenspace structure of the Jacobi operator for $\mathbb{S}^m$, $\mathbb{CP}^k$,
and $\mathbb{HP}^k$ is well known. The curvature tensor of $\mathbb{OP}^2$ was computed
by Brown and Gray~\cite{BG72} (see  Theorem 6.1); the corresponding eigenvalue decomposition of the Jacobi operator
computed by  Nikolayevsky~\cite{N03} (see the discussion on Page 510). The rank 1 symmetric spaces are Einstein.
For such spaces, the Weyl conformal Jacobi operator is defined by subtracting a
suitable multiple $\kappa$
of the Jacobi
operator for the sphere where {we set $\kappa:=\displaystyle\frac{\rho_{\mathbb{M}}(x,x)}{m-1}$
 to ensure}
$\operatorname{Tr}\{\mathcal{J}^W_{\mathbb{M}}\}(x)=0$ for all unit vectors $x$. Assertion~(2) now follows.
\end{proof}

We reverse the sign of the curvature tensor to compute for the negative curvature duals. The following result is now
immediate from Lemma~\ref{L3.1} and from Lemma~\ref{L3.2}.
\begin{lemma}\label{L3.3}\rm Let $m\ge4$ be even, let
$\mathbb{M}=\mathbb{M}_{i,m,\vec{\mathfrak{H}}}$, let $Q$ be a point of $M$,
and let $0\ne x\in T_Q(M_{i,m})$.
\begin{enumerate}
\item If $i<7$, then $0$ is an eigenvalue of multiplicity $1$ of $\mathcal{J}^W_{\mathbb{M}}(x)$.
If $i=7$, then $\mathcal{J}^W_{\mathbb{M}}(x)$ vanishes identically.
\item If $i=1$, so $M_{i,m}=\mathbb{CP}^k$ for $m=2k$ and $k\ge2$, then
$\mathcal{J}^W_{\mathbb{M}}(x)$ has a negative eigenvalue of multiplicity $m-2$ and
a positive eigenvalue of multiplicity $1$.
\item If $i=2$, so $M_{i,m}=\mathbb{HP}^k$ for $m=4k$ and $k\ge2$, then
$\mathcal{J}^W_{\mathbb{M}}(x)$ has a negative eigenvalue of multiplicity $m-4$ and
a positive eigenvalue of multiplicity $3$.
\item If $i=3$, so $M_{i,m}=\mathbb{OP}^2$ for $m=16$, then
$\mathcal{J}^W_{\mathbb{M}}(x)$ has a negative eigenvalue of multiplicity $8$ and
a positive eigenvalue of multiplicity $7$.
\item If $i=4$, so $M_{i,m}=\widetilde{\mathbb{CP}}^k$ for $m=2k$ and $k\ge2$, then
$\mathcal{J}^W_{\mathbb{M}}(x)$ has a positive eigenvalue of multiplicity $m-2$ and
a negative eigenvalue of multiplicity $1$.
\item If $i=5$, so $M_{i,m}=\widetilde{\mathbb{HP}}^k$ for $m=4k$ and $k\ge2$, then
$\mathcal{J}^W_{\mathbb{M}}(x)$ has a positive eigenvalue of multiplicity $m-4$ and
a negative eigenvalue of multiplicity $3$.
\item If $i=6$, so $M_{i,m}=\widetilde{\mathbb{OP}}^2$ for $m=16$, then
$\mathcal{J}^W_{\mathbb{M}}(x)$ has a positive eigenvalue of multiplicity $8$ and
a negative eigenvalue of multiplicity $7$.\end{enumerate}
\end{lemma}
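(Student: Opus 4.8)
The plan is to transfer the question from the deformed manifold $\mathbb{M}_{i,m,\vec{\mathfrak{H}}}$ to the underlying rank 1 symmetric space and then simply read off the spectral data recorded in Lemma~\ref{L3.2}. First I would invoke the conformal invariance of Lemma~\ref{L3.1}: since $\mathbb{M}_{i,m,\vec{\mathfrak{H}}}=(M_{i,m},\psi_{i,m,\vec{\mathfrak{H}}}^2g_{i,m})$ is a conformal deformation of $\mathbb{M}_{i,m}$ and $\mathcal{J}^W$ depends only on the conformal class, we have the pointwise identity $\mathcal{J}^W_{\mathbb{M}_{i,m,\vec{\mathfrak{H}}}}(x)=\mathcal{J}^W_{\mathbb{M}_{i,m}}(x)$ for every $Q\in M_{i,m}$ and every $0\ne x\in T_Q(M_{i,m})$. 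Thus the eigenvalue structure to be described is exactly that of the Weyl Jacobi operator of the undeformed symmetric space. Moreover $\mathcal{J}^W(x)y=\mathcal{W}(y,x)x$ is quadratic in $x$, so passing from a general nonzero $x$ to the unit vector $x/\|x\|$ scales all eigenvalues by $\|x\|^2>0$, leaving signs and multiplicities unchanged; it therefore suffices to treat unit vectors.

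Next I would use homogeneity. For $1\le i\le 6$ the space $\mathbb{M}_{i,m}$ is a two-point homogeneous rank 1 symmetric space, so the spectrum of $\mathcal{J}^W(x)$ is independent of the point $Q$ and of the unit vector $x$, and may be computed at the base point from Lemma~\ref{L3.2}. For the positive curvature cases $i\le 3$, Lemma~\ref{L3.2}(2) supplies the eigenvalues $\lambda_0,\lambda_1,\lambda_4$ on the eigenspaces $E_0,E_1,E_4$ of Lemma~\ref{L3.2}(1), and I would only need to determine their signs. A short computation gives $\lambda_1=-\tfrac{3}{2k-1}<0$, $\lambda_4=\tfrac{6(k-1)}{2k-1}>0$ for $\mathbb{CP}^k$; $\lambda_1=-\tfrac{9}{4k-1}<0$, $\lambda_4=\tfrac{12(k-1)}{4k-1}>0$ for $\mathbb{HP}^k$; and $\lambda_1=-\tfrac{7}{5}<0$, $\lambda_4=\tfrac{8}{5}>0$ for $\mathbb{OP}^2$. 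Combining these signs with the multiplicities $\dim E_1$ and $\dim E_4$ from the first table yields Assertions~(2)--(4), while $\lambda_0=0$ together with $\dim E_0=1$ gives the one-dimensional kernel asserted in~(1).

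For the negative curvature duals $i=4,5,6$ I would use the fact stated just before the lemma that their curvature tensor is the negative of that of the corresponding positive curvature space; hence $\mathcal{J}^W$ is replaced by $-\mathcal{J}^W$, every eigenvalue changes sign, and the eigenspaces and their dimensions are unchanged. This interchanges the positive and negative eigenvalues and preserves the one-dimensional kernel, giving Assertions~(5)--(7) and the remaining part of~(1) for $i<7$. Finally, for $i=7$ the space is flat, so by Lemma~\ref{L3.1} its Weyl tensor and hence $\mathcal{J}^W$ vanish identically, which completes~(1).

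The argument is almost entirely bookkeeping, and I expect no serious obstacle. The only genuine point to verify is the sign determination of the nonzero eigenvalues $\lambda_1$ and $\lambda_4$: this is where the hypothesis $k\ge2$ is used, since it is exactly what makes $\lambda_4$ strictly positive (it would vanish at $k=1$) and thereby separates the signs of the two nonzero eigenvalues.
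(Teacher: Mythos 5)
Your proposal is correct and follows essentially the same route as the paper: conformal invariance of $\mathcal{J}^W$ (Lemma~\ref{L3.1}) reduces the question to the undeformed rank~1 symmetric spaces, the eigenvalue tables of Lemma~\ref{L3.2} give the signs and multiplicities (your computations $\lambda_1<0<\lambda_4$ for $k\ge2$ are all correct), and reversing the sign of the curvature tensor handles the negative curvature duals. Your explicit remarks on homogeneity and on the quadratic scaling in $x$ are points the paper leaves implicit, but they do not change the argument.
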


Assertion~(4) of Theorem~\ref{T1.4} now follows from Lemma~\ref{L3.3}; this completes the proof of Theorem~\ref{T1.4}\qed

\subsection{The proof of Theorem~\ref{T1.6}~(2)}\label{S3.3} Let $m\ge5$ be odd.
Let $y=(x,t)$ be a tangent vector of $M_{i,m}$ where $y$ is a tangent vector to $M_{i,m-1}$ and
$t$ is a tangent vector to $\mathbb{R}$. Since $\mathcal{J}$ is conformal, we may use Equation~(\ref{E1.a}) to see:
\begin{equation}\label{E3.a}
\mathcal{J}^W_{\mathbb{M}_{i,m,\vec{\mathfrak{H}}}}(x)=
\mathcal{J}^W_{\mathbb{M}_{i,m}}(x)=
\mathcal{J}^W_{\mathbb{M}_{i,m-1}}(y)\,.
\end{equation}
The following result now follows from Equation~(\ref{E3.a}) and from Lemma~\ref{L3.3}.

\begin{lemma}\rm Let $m\ge5$ be odd, let
$\mathbb{M}=\mathbb{M}_{i,m,\vec{\mathfrak{H}}}$,  let $Q$ be a point of $M_{i,m}$.
Choose $x\in T_P(M_{i,m})$ so $\mathcal{J}^W_{\mathbb{M}}(x)$ has maximal rank.
\begin{enumerate}
\item If $i<7$, then $0$ is an eigenvalue of multiplicity $2$ of $\mathcal{J}^W_{\mathbb{M}}(x)$.
If $i=7$, then $\mathcal{J}^W_{\mathbb{M}}(x)$ vanishes identically.
\item If $i=1$ so $M_{i,m-1}=\mathbb{CP}^k$ for $m-1=2k$ and $k\ge2$, then
$\mathcal{J}^W_{\mathbb{M}}(x)$ has a negative eigenvalue of multiplicity $m-3$ and
a positive eigenvalue of multiplicity $1$.
\item If $i=2$ so $M_{i,m-1}=\mathbb{HP}^k$ for $m-1=4k$ and $k\ge2$, then
$\mathcal{J}^W_{\mathbb{M}}(x)$ has a negative eigenvalue of multiplicity $m-5$ and
a positive eigenvalue of multiplicity $3$.
\item If $i=3$ so $M_{i,m-1}=\mathbb{OP}^2$ for $m-1=16$, then
$\mathcal{J}^W_{\mathbb{M}}(x)$ has a negative eigenvalue of multiplicity $8$ and
a positive eigenvalue of multiplicity $7$.
\item If $i=4$ so $M_{i,m-1}=\widetilde{\mathbb{CP}}^k$ for $m-1=2k$ and $k\ge2$, then
$\mathcal{J}^W_{\mathbb{M}}(x)$ has a positive eigenvalue of multiplicity $m-3$ and
a negative eigenvalue of multiplicity $1$.
\item If $i=5$ so $M_{i,m-1}=\widetilde{\mathbb{HP}}^k$ for $m-1=4k$ and $k\ge2$, then
$\mathcal{J}^W_{\mathbb{M}}(x)$ has a positive eigenvalue of multiplicity $m-5$ and
a negative eigenvalue of multiplicity $3$.
\item If $i=6$ so $M_{i,m-1}=\widetilde{\mathbb{OP}}^2$ for $m-1=16$, then
$\mathcal{J}^W_{\mathbb{M}}(x)$ has a positive eigenvalue of multiplicity $8$ and
a negative eigenvalue of multiplicity $7$.\end{enumerate}
\end{lemma}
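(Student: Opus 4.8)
The plan is to deduce the result directly from Equation~(\ref{E3.a}) together with Lemma~\ref{L3.3}, the only genuinely new step being to account for the extra kernel direction contributed by the Euclidean factor. First I would fix a point $Q\in M_{i,m}$ and use the orthogonal splitting $T_QM_{i,m}=V\oplus L$, where $V$ is tangent to the $M_{i,m-1}$ factor and $L=\mathbb{R}\,\partial_t$ is tangent to the $\mathbb{R}$ factor. By Lemma~\ref{L3.1} the Weyl Jacobi operator is a conformal invariant, so $\mathcal{J}^W_{\mathbb{M}_{i,m,\vec{\mathfrak{H}}}}=\mathcal{J}^W_{\mathbb{M}_{i,m}}$; this is the first equality of Equation~(\ref{E3.a}). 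The second equality records that, by the product formula $\mathcal{W}_{\mathbb{M}_{i,m}}=\mathcal{W}_{\mathbb{M}_{i,m-1}}\oplus0$ of Equation~(\ref{E1.a}), the Weyl curvature operator sees only the $M_{i,m-1}$ factor.

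Next I would compute the Weyl Jacobi operator explicitly in this splitting. Writing $x=x_1+s\,\partial_t$ with $x_1\in V$ and $s\in\mathbb{R}$, and using $\mathcal{W}=\mathcal{W}_{\mathbb{M}_{i,m-1}}\oplus0$, one finds that $\mathcal{J}^W_{\mathbb{M}_{i,m}}(x)$ preserves the splitting $V\oplus L$, acts on $V$ as the even-dimensional operator $\mathcal{J}^W_{\mathbb{M}_{i,m-1}}(x_1)$, and annihilates $L$; in particular the answer is independent of $s$. Hence its spectrum is that of $\mathcal{J}^W_{\mathbb{M}_{i,m-1}}(x_1)$ together with one additional zero eigenvalue coming from $L$, and its rank is maximal exactly when $x_1\neq0$.

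I would then read off the multiplicities from Lemma~\ref{L3.3} applied in the even dimension $m-1$. For $i<7$ the operator $\mathcal{J}^W_{\mathbb{M}_{i,m-1}}(x_1)$ has a one-dimensional kernel (spanned by $x_1$), so adjoining the line $L$ yields a zero eigenvalue of multiplicity $2$, while the signs and multiplicities of the nonzero eigenvalues are inherited verbatim from Lemma~\ref{L3.3} with $m$ replaced by $m-1$. For $i=1$ this produces a negative eigenvalue of multiplicity $(m-1)-2=m-3$ and a positive eigenvalue of multiplicity $1$, which is the claimed count; the cases $i=2,\dots,6$ follow identically upon substituting the relevant rows of Lemma~\ref{L3.3}, and the flat case $i=7$ is immediate since $\mathcal{W}_{\mathbb{M}_{7,m}}=0$ forces $\mathcal{J}^W\equiv0$.

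The argument is essentially forced once Equation~(\ref{E3.a}) is available, so I do not expect a serious obstacle; the only point needing care is the maximal-rank hypothesis. I would check that the rank of $\mathcal{J}^W_{\mathbb{M}_{i,m}}(x)$ drops precisely when $x_1=0$, i.e.\ when $x$ is purely Euclidean, so that demanding maximal rank selects directions with a nonzero $M_{i,m-1}$ component. For such directions the homogeneity of the rank $1$ symmetric factor (via conformal invariance) guarantees that the spectrum of $\mathcal{J}^W_{\mathbb{M}_{i,m-1}}(x_1)$ has the uniform structure of Lemma~\ref{L3.3}, independent of the particular direction and of the basepoint $Q$.
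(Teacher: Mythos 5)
Your proposal is correct and follows essentially the same route as the paper: conformal invariance of $\mathcal{J}^W$ (Lemma~\ref{L3.1}) plus the product identity $\mathcal{W}_{\mathbb{M}_{i,m}}=\mathcal{W}_{\mathbb{M}_{i,m-1}}\oplus0$ of Equation~(\ref{E1.a}) give Equation~(\ref{E3.a}), and the multiplicities are then read off from Lemma~\ref{L3.3} in dimension $m-1$ with one extra zero eigenvalue from the Euclidean line. The paper compresses all of this into a single sentence, so your explicit verification that $\mathcal{J}^W_{\mathbb{M}_{i,m}}(x)$ preserves the splitting, annihilates the $\mathbb{R}$-direction, and has maximal rank exactly when the $M_{i,m-1}$-component of $x$ is nonzero is a faithful (and welcome) elaboration of the intended argument rather than a different one.
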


Assertion~(3) of Theorem~\ref{T1.6} now follows; this completes the proof of Theorem~\ref{T1.6}
and thereby of all of the results of this paper.\qed

\section*{Acknowledgements}
The research of P. Gilkey was partially supported by grant PID2020-114474GB-I0 (Spain).
The research of J. H. Park was partially supported by the National Research Foundation of Korea (NRF) grant
funded by the Korea government (MSIT) (NRF-2019R1A2C1083957).
Helpful suggestions and comments were provided by our colleagues and friends
J. \'Alvarez-L\'opez, M. van den Berg, and E. Garc\'{i}a-R\'{i}o.
\section*{Dedication}
On 11 March 2004, 10 bombs exploded on 4 trains near the Atocha Station
in Madrid killing 191 and injuring more than 1800; 18 Islamic fundamentalists and
3 Spanish accomplices were convicted of the bombings which was one of Europe's deadliest
terrorist attacks in the years since World War II. Subsequently, Gilkey and his coauthors
dedicated a paper \cite{DFGG04} writing ``En memoria de todas las
v\'\i ctimas inocentes. Todos \'\i bamos en ese tre{n}.
(In memory of all these innocent victims. We were all on that train)''.
This paper is being written during one of the worst outbreaks of war in Europe since World War II.
We dedicate this paper, writing in
a similar vein to show our solidarity with the innocent victims in Ukraine, that:
  Mи всі в Україні (``we are all in Ukraine'').

\end{document}